\newtheorem{theorem}{Theorem}
\newtheorem{lemma}{Lemma}
\newtheorem{definition}{Definition}
\newtheorem{example}{Example}
\newtheorem{corollary}{Corollary}
\begin{document}
{\selectlanguage{english}
\binoppenalty = 10000 %
\relpenalty   = 10000 %

\pagestyle{headings} \makeatletter
\renewcommand{\@evenhead}{\raisebox{0pt}[\headheight][0pt]{\vbox{\hbox to\textwidth{\thepage\hfill \strut {\small Grigory. K. Olkhovikov}}\hrule}}}
\renewcommand{\@oddhead}{\raisebox{0pt}[\headheight][0pt]{\vbox{\hbox to\textwidth{{Model-theoretic characterization of intuitionistic propositional formulas}\hfill \strut\thepage}\hrule}}}
\makeatother

\title{Model-theoretic characterization of intuitionistic propositional formulas\footnote{Acknowledgements:
 I am grateful to Prof. Grigori Mints (Stanford University), who informed me about the open problem which is solved in
 this paper, and spent a lot of his time discussing the possible solutions with me. I am grateful to
 Prof. Balder ten Cate (University of California, Santa Cruz), who made many valuable suggestions on improving the presentation of this result and
 developing its formulation, both in person and in e-mail correspondence.
 I would like to thank all the participants of seminar on logical methods at Stanford
 University for their attention to my talk based on an earlier version of this paper, and for the discussion that
 followed.}}
\author{Grigory K. Olkhovikov\\ Chair of Ontology and Cognition Theory\\ Ural Federal University
\\Fulbright Visiting Scholar at the Philosophy Dept,\\Stanford University \\Bldg 90, Stanford, CA, USA\\
tel.: +1-650-739-6773, +7-922-6173182\\
email: grigory.olkhovikov@usu.ru, grigory.olkhovikov@gmail.com}
\date{}
\maketitle
\begin{quote}
{\bf Abstract.} Notions of $k$-asimulation and asimulation are
introduced as asymmetric counterparts to $k$-bisimulation and
bisimulation, respectively. It is proved that a first-order
formula is equivalent to a standard translation of an
intuitionistic propositional formula iff it is invariant with
respect to $k$-asimulations for some $k$, and then that a
first-order formula is equivalent to a standard translation of an
intuitionistic propositional formula iff it is invariant with
respect to asimulations. Finally, it is proved that a first-order
formula is intuitionistically equivalent to a standard translation
of an intuitionistic propositional formula iff it is invariant
with respect to asimulations between intuitionistic models.
\end{quote}

\begin{quote}
{\bf Keywords.} model theory, intuitionistic logic, propositional
logic, bisimulation, Van Benthem's theorem.
\end{quote}

Van Benthem's well-known  modal characterization theorem (Theorem
\ref{L:vB} below) states that a first-order formula is equivalent
to a standard translation of a modal propositional formula iff it
is invariant with respect to bisimulations. There is also a weaker
`parametrized' version of this result stating that a first-order
formula is equivalent to a standard translation of a modal
propositional formula iff this formula is invariant with respect
to $k$-bisimulations for some $k$. Although both results yield a
convenient model-theoretical technique distinguishing `modal'
first-order formulas from `non-modal' ones, Van Benthem's
characterization theorem, unlike its parametrized version, also
isolates a single property defining expressive powers of modal
propositional logic and thus gives us an important insight into
its nature when this logic is viewed as a fragment of first-order
logic.

It is somewhat surprising that results analogous to Van Benthem's
modal characterization theorem and its parametrized version were
not obtained thus far for the intuitionistic propositional logic,
although the view of the latter as a fragment of modal
propositional logic has a long and established tradition dating
back to Tarski-G\"{o}del translation of this logic into $S4$. The
present paper fills this gap.

The layout of the paper is as follows. Starting from some
notational conventions and preliminary remarks in section
\ref{S:Prel}, we then move on to the proof of a `parametrized'
version of model-theoretic characterization of intuitionistic
propositional logic in section \ref{S:Param} and finally prove the
full unparametrized counterpart to Van Benthem's characterization
theorem for intuitionistic propositional logic in section
\ref{S:Main}. From this latter result we derive a characterization
of equivalence of a first-order formula to a standard translation
of intuitionistic formula on the class of intuitionistic models.
This latter result is of special interest, given that, unlike in
the case of modal propositional logic, not every first-order model
can be treated as a model of intuitionistic propositional logic.
Finally, in section \ref{S:final} we sum up and state some
directions for further research.

\section{Preliminaries}\label{S:Prel}

A formula is a formula of classical predicate logic with identity
whose predicate letters are in vocabulary $\Sigma = \{\,R^2,
P_1^1,\ldots P_n^1,\ldots\,\}$. A model is a model of this logic.
We refer to formulas with lower-case Greek letters distinct from
$\alpha$ and $\beta$, and to sets of formulas with upper-case
Greek letters distinct from $\Sigma$. If $\varphi$ is a formula,
then we associate with it the following finite vocabulary
$\Sigma_\varphi \subseteq \Sigma$ such that $\Sigma_\varphi =
\{\,R^2\,\} \cup \{\,P_i \mid P_i \text{ occurs in }\varphi\,\}$.
If $\psi$ is a formula, $\Sigma' \subseteq \Sigma$ and every
predicate letter occurring in $\psi$ is in $\Sigma'$, then we call
$\psi$ a $\Sigma'$-formula.

 We refer to sequence $x_1,\dots, x_n$ of any objects as
$\bar{x}_n$. If all free variables of a formula $\varphi$ coincide
with a variable $x$, we write $\varphi(x)$. If all free variables
of formulas in $\Gamma$ coincide with $x$, we write $\Gamma(x)$.
We refer to the domain of a model $M$ by $D(M)$. A pointed model
is a pair $(M, a)$, where $M$ is a first-order model and $a \in
D(M)$. If $(M, a)$ is a pointed model, we write $M, a \models
\varphi(x)$ and say that $\varphi(x)$ is true at $(M, a)$ iff for
any variable assignment $f$ in $M$ such that $f(x) = a$, we have
$M, f \models \varphi(x)$. It follows from this convention that
truth of a formula $\varphi(x)$ at a pointed model is to some
extent independent from the choice of its only free variable.

An intuitionistic formula is a formula of intuitionistic
propositional logic. We refer to intuitionistic formulas with
letters $i, j, k$, possibly with primes or subscripts.  We assume
a standard Kripke semantics for intuitionistic propositional
logic.

If $x$ is an individual variable in a first-order language, then
by standard $x$-translation of intuitionistic formulas into
formulas we mean the following map $ST$ defined by induction on
the complexity of the corresponding intuitionistic formula. The
induction goes as follows:
\begin{align*}
&ST(p_n, x) = P_n(x);\\
&ST(\bot, x) = (x \neq x);\\
&ST(i \wedge j, x) = ST(i, x) \wedge
ST(j, x);\\
&ST(i \vee j, x) = ST(i, x) \vee
ST(j, x);\\
&ST(i \to j, x) = \forall y(R(x, y) \to (ST(i, y) \to ST(j, y))).
\end{align*}

Standard conditions are imposed on the variables $x, y$.

By degree of a formula we mean the greatest number of nested
quantifiers occurring in it. Degree of a formula $\varphi$ is
denoted by $r(\varphi)$. Its formal definition by induction on the
complexity of $\varphi$ goes as follows:
\begin{align*}
&r(\varphi) = 0 &&\text{for atomic $\varphi$}\\
&r(\neg\varphi) = r(\varphi)\\
&r(\varphi \circ \psi) = max(r(\varphi), r(\psi)) &&\text{for $\circ \in \{\,\wedge, \vee, \to\,\}$}\\
&r(Qx\varphi) = r(\varphi) + 1 &&\text{for $Q \in \{\,\forall,
\exists\,\}$}
\end{align*}

If $\Sigma' \subseteq \Sigma$, $k \in \mathbb{N}$ and $\varphi(x)$
is a $\Sigma'$-formula such that $r(\varphi) \leq k$, then
$\varphi$ is a $(\Sigma', x, k)$-formula.

\section{A parametrized version of the main result}\label{S:Param}

We start with the definition of an `intuitionistic' counterpart of
$k$-bisimulation.
\begin{definition}\label{D:k-asim}
Let $\Sigma' \subseteq \Sigma$, $R^2 \in \Sigma'$, $(M, a)$, $(N,
b)$ be two pointed $\Sigma'$-models. A binary relation
\[
A \subseteq \bigcup_{n > 0}((D(M)^n \times D(N)^n) \cup (D(N)^n
\times D(M)^n)),
\]
is called $\langle (M,a), (N,b)\rangle_k$-asimulation iff
$(a)A(b)$ and for any $\alpha, \beta \in \{\,M, N\,\}$, any
sequence $(\bar{a'}_m, a') \in D(\alpha)^{m+1}$ and any sequence
$(\bar{b'}_m, b') \in D(\beta)^{m+1}$, whenever we have
$(\bar{a'}_m, a')A(\bar{b'}_m, b')$, the following conditions
hold:

\begin{align}
&\forall P \in \Sigma'(\alpha, a' \models P(x) \Rightarrow \beta, b' \models P(x))\label{E:c1}\\
&b'' \in D(\beta) \wedge b'R^\beta b'' \wedge m < k \Rightarrow\notag\\
&\quad\Rightarrow \exists a'' \in D(\alpha)(a'R^\alpha a'' \wedge
(\bar{b'}_m, b', b'')A(\bar{a'}_m, a', a'') \wedge (\bar{a'}_m,
a', a'')A(\bar{b'}_m, b', b''))\label{E:c2}
\end{align}
\end{definition}
\begin{example}\label{k-asim}
Consider two $\{\,R^2,P^1\,\}$-models $M$ and $N$ such that $D(M)
= \{\,a,b,c\,\}$, $R^M =\{\,(a,b), (a,c)\,\}$, $P^M = \{\,c\,\}$,
and $D(N) = \{\,d,e\,\}$, $R^N =\{\,(d,e)\,\}$, $P^N = \{\,d\,\}$.
Then binary relation $A$ such that $(a)A(d)$, $(d,e)A(a,b)$ and
$(a,b)A(d,e)$ is an $\langle (M,a), (N,d)\rangle_k$-asimulation
for every $k \in \mathbb{N}$.
\end{example}
\begin{lemma}\label{L:asim}
Let $\varphi(x) = ST(i, x)$ for some intuitionistic formula $i$,
and let $r(\varphi) = k$. Let $\Sigma_\varphi \subseteq \Sigma'
\subseteq \Sigma$, $(M, a)$, $(N, b)$ be two pointed
$\Sigma'$-models, let $A$ be an $\langle (M,a),
(N,b)\rangle_l$-asimulation. Then
\begin{align*}
&\forall\alpha, \beta \in \{\,M, N\,\}\forall(\bar{a'}_m, a') \in
D(\alpha)^{m + 1}\forall(\bar{b'}_m, b') \in D(\beta)^{m + 1}\\
&\qquad\qquad((\bar{a'}_m, a')A(\bar{b'}_m, b') \wedge m + k \leq
l \wedge \alpha, a' \models \varphi(x) \Rightarrow \beta, b'
\models \varphi(x)).
\end{align*}
\end{lemma}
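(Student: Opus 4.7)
The plan is to proceed by induction on the complexity of the intuitionistic formula $i$ (equivalently, on the structure of $\varphi = ST(i,x)$). The key technical feature, matched exactly by the symmetric definition of asimulation over pairs of tuples drawn from either side, is that the induction hypothesis will be available \emph{in both directions} at each step, because the statement of the lemma quantifies freely over $\alpha, \beta \in \{\,M,N\,\}$.

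The atomic cases are immediate: for $i = p_n$ the conclusion follows directly from condition (\ref{E:c1}) applied to the pair $(\bar{a'}_m, a') A (\bar{b'}_m, b')$, while for $i = \bot$ the premise $\alpha, a' \models (x \neq x)$ is vacuous. The cases $i = j_1 \wedge j_2$ and $i = j_1 \vee j_2$ are straightforward applications of the induction hypothesis to each conjunct or disjunct at the same tuple, using $r(ST(j_s,x)) \leq k$.

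The crucial case is $i = j_1 \to j_2$, where $\varphi(x) = \forall y(R(x,y) \to (ST(j_1,y) \to ST(j_2,y)))$, so $r(ST(j_s,y)) \leq k-1$ and $k \geq 1$. Assume $\alpha, a' \models \varphi(x)$, and argue by contradiction: suppose there is $b'' \in D(\beta)$ with $b' R^\beta b''$, $\beta, b'' \models ST(j_1,y)$, and $\beta, b'' \not\models ST(j_2,y)$. Since $k \geq 1$ we have $m < m+k \leq l$, so condition (\ref{E:c2}) of Definition \ref{D:k-asim} produces some $a'' \in D(\alpha)$ with $a' R^\alpha a''$ together with \emph{both} $(\bar{b'}_m, b', b'') A (\bar{a'}_m, a', a'')$ and $(\bar{a'}_m, a', a'') A (\bar{b'}_m, b', b'')$. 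From $\alpha, a' \models \varphi(x)$ and $a' R^\alpha a''$ we get either $\alpha, a'' \models ST(j_2, y)$ or $\alpha, a'' \not\models ST(j_1, y)$. In the first sub-case, apply the induction hypothesis to $j_2$ along $(\bar{a'}_m, a', a'') A (\bar{b'}_m, b', b'')$, using $(m+1) + (k-1) \leq l$, to transfer truth of $ST(j_2,y)$ from $a''$ to $b''$, contradicting the choice of $b''$. In the second sub-case, apply the induction hypothesis to $j_1$ along the \emph{reversed} relation $(\bar{b'}_m, b', b'') A (\bar{a'}_m, a', a'')$ (which is precisely the second conjunct delivered by (\ref{E:c2})) to transfer truth of $ST(j_1,y)$ from $b''$ to $a''$, again a contradiction.

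I expect the main obstacle, small but conceptually central, to be the bookkeeping in the implication case: one must apply the induction hypothesis \emph{in opposite directions} within a single step, which is exactly why asimulations are defined symmetrically across $\{\,M,N\,\}$ and why condition (\ref{E:c2}) demands both $A$-pairings $(\cdot)A(\cdot)$ and $(\cdot)A(\cdot)$. Recognizing that the apparently redundant symmetry in Definition \ref{D:k-asim} is precisely what makes the $\to$-case go through is the heart of the argument; the rest is a routine verification of the degree bound $m + r(ST(i,x)) \leq l$ at each recursive step.
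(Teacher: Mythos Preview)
Your proposal is correct and follows essentially the same approach as the paper's proof: induction on the complexity of $i$, with the implication case handled by contradiction using both $A$-pairings supplied by condition~(\ref{E:c2}). The only cosmetic difference is that in the $\to$-case the paper transfers $ST(j_1,y)$ from $b''$ to $a''$ and (by contraposition) $\neg ST(j_2,y)$ from $b''$ to $a''$, obtaining the contradiction directly at $(\alpha,a')$, whereas you first case-split on which half of the implication fails at $a''$; the two presentations are logically interchangeable.
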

\begin{proof}
We proceed by induction on the complexity of $i$. In what
follows we will abbreviate the induction hypothesis by IH.

\emph{Basis.}  Let $i = p_n$. Then $\varphi(x) = P_n(x)$ and we
reason as follows:
\begin{align}
&(\bar{a'}_m, a')A(\bar{b'}_m, b')\label{E:0l1} &&\text{(premise)}\\
&\alpha, a' \models P_n(x)\label{E:0l1-1} &&\text{(premise)}\\
&P_n \in \Sigma'\label{E:0l1-2} &&\text{(by $\Sigma_\varphi \subseteq \Sigma'$)}\\
&\forall P \in \Sigma'(\alpha, a' \models P(x) \Rightarrow \beta,
b'
\models P(x))\label{E:0l2} &&\text{(from \eqref{E:0l1} by \eqref{E:c1})}\\
&\alpha, a' \models P_n(x) \Rightarrow \beta, b'
\models P_n(x)\label{E:0l3} &&\text{(from \eqref{E:0l1-2} and \eqref{E:0l2})}\\
&\beta, b' \models P_n(x)\label{E:0l4} &&\text{(from
\eqref{E:0l1-1} and \eqref{E:0l3})}
\end{align}

The case $i = \bot$ is obvious.

\emph{Induction step.}

\emph{Case 1.} Let $i = j \wedge k$. Then $\varphi(x) = ST(j, x)
\wedge ST(k,x)$ and we reason as follows:

\begin{align}
&(\bar{a'}_m, a')A(\bar{b'}_m, b')\label{E:1l1} &&\text{(premise)}\\
&\alpha, a' \models ST(j, x) \wedge ST(k,x)\label{E:1l2} &&\text{(premise)}\\
&m + r(ST(j, x) \wedge ST(k,x)) \leq l\label{E:1l3} &&\text{(premise)}\\
&r(ST(j, x)) \leq r(ST(j, x) \wedge ST(k,x))\label{E:1l4} &&\text{(by df of $r$)}\\
&r(ST(k, x)) \leq r(ST(j, x) \wedge ST(k,x))\label{E:1l5} &&\text{(by df of $r$)}\\
&\alpha, a' \models ST(j, x)\label{E:1l6} &&\text{(from \eqref{E:1l2})}\\
&\alpha, a' \models ST(k, x)\label{E:1l7} &&\text{(from \eqref{E:1l2})}\\
&m + r(ST(j, x)) \leq l\label{E:1l8} &&\text{(from \eqref{E:1l3} and \eqref{E:1l4})}\\
&m + r(ST(k, x)) \leq l\label{E:1l9} &&\text{(from \eqref{E:1l3} and \eqref{E:1l5})}\\
&\beta, b' \models ST(j, x)\label{E:1l10} &&\text{(from \eqref{E:1l1}, \eqref{E:1l6} and \eqref{E:1l8} by IH)}\\
&\beta, b' \models ST(k, x)\label{E:1l11} &&\text{(from \eqref{E:1l1}, \eqref{E:1l7} and \eqref{E:1l9} by IH)}\\
&\beta, b' \models ST(j, x) \wedge ST(k,x)\label{E:1l12}
&&\text{(from \eqref{E:1l10} and \eqref{E:1l11})}
\end{align}

\emph{Case 2.} Let $i = j \vee k$. Then $\varphi(x) = ST(j, x)
\vee ST(k,x)$ and we have then $\alpha, a' \models ST(j, x) \vee
ST(k,x)$. Assume, without a loss of generality, that $\alpha, a'
\models ST(j, x)$. Then we reason as follows:

\begin{align}
&\alpha, a' \models ST(j, x)\label{E:2l0} &&\text{(premise)}\\
&(\bar{a'}_m, a')A(\bar{b'}_m, b')\label{E:2l1} &&\text{(premise)}\\
&m + r(ST(j, x) \vee ST(k,x)) \leq l\label{E:2l2} &&\text{(premise)}\\
&r(ST(j, x)) \leq r(ST(j, x) \vee ST(k,x))\label{E:2l3} &&\text{(by df of $r$)}\\
&m + r(ST(j, x)) \leq l\label{E:2l4} &&\text{(from \eqref{E:2l2} and \eqref{E:2l3})}\\
&\beta, b' \models ST(j, x)\label{E:2l5} &&\text{(from \eqref{E:2l0}, \eqref{E:2l1} and \eqref{E:2l4} by IH)}\\
&\beta, b' \models ST(j, x) \vee ST(k,x)\label{E:2l6}
&&\text{(from \eqref{E:2l5})}
\end{align}

\emph{Case 3.} Let $i = j \to k$. Then
\[
\varphi(x) = \forall y(R(x, y) \to (ST(j, y) \to ST(k, y))).
\]
Let
\[
\alpha, a' \models \forall y(R(x, y) \to (ST(j, y) \to (ST(k,
y))),
\]
and let
\[
\beta, b' \models \exists y(R(x, y) \wedge (ST(j, y) \wedge \neg
ST(k, y))).
\]
This means that we can choose a $b'' \in D(\beta)$ such that
$b'R^\beta b''$ and $\beta, b'' \models ST(j, y) \wedge \neg ST(k,
y)$. We now reason as follows:
\begin{align}
&\beta, b'' \models ST(j, y) \wedge \neg ST(k,
y)\label{E:4l00} &&\text{(by choice of $b''$)}\\
&b'' \in D(\beta) \wedge b'R^\beta b''\label{E:4l0} &&\text{(by choice of $b''$)}\\
&(\bar{a'}_m, a')A(\bar{b'}_m, b')\label{E:4l1} &&\text{(premise)}\\
&m + r(\varphi(x)) \leq l\label{E:4l2} &&\text{(premise)}\\
&r(\varphi(x)) \geq 1 \label{E:4l3} &&\text{(by df of $r$)}\\
&m < l\label{E:4l4} &&\text{(from \eqref{E:4l2} and
\eqref{E:4l3})}
\end{align}
\begin{align}
\exists a'' \in D(\alpha)(a'R^\alpha a'' \wedge (\bar{b'}_m, b',
b'')A(\bar{a'}_m, a', a'') \wedge (\bar{a'}_m, a',
a'')A(\bar{b'}_m, b', b''))\label{E:4l5}\\
\text{(from \eqref{E:4l0}, \eqref{E:4l1} and \eqref{E:4l4}
by\eqref{E:c2})}\notag
\end{align}
Now choose an $a''$ for which \eqref{E:4l5} is satisfied; we add
the premises following from our choice of $a''$ and continue our
reasoning as follows:
\begin{align}
&a'' \in D(\alpha) \wedge a'R^\alpha a''\label{E:4l6} &&\text{(by choice of $a''$)}\\
&(\bar{b'}_m, b', b'')A(\bar{a'}_m, a', a'')\label{E:4l7} &&\text{(by choice of $a''$)}\\
&(\bar{a'}_m, a', a'')A(\bar{b'}_m, b', b'')\label{E:4l7-1} &&\text{(by choice of $a''$)}\\
&r(ST(j, y)) \leq r(\varphi(x)) - 1\label{E:4l8} &&\text{(by df of $r$)}\\
&r(ST(k, y)) \leq r(\varphi(x)) - 1\label{E:4l8-1} &&\text{(by df of $r$)}\\
&m + 1 + r(ST(j, y)) \leq l\label{E:4l9} &&\text{(from
\eqref{E:4l2} and \eqref{E:4l8})}\\
&m + 1 + r(ST(k, y)) \leq l\label{E:4l9-1} &&\text{(from
\eqref{E:4l2} and \eqref{E:4l8-1})}\\
&\alpha, a'' \models ST(j, x)\label{E:4l10} &&\text{(from \eqref{E:4l00}, \eqref{E:4l7}, \eqref{E:4l9} by IH)}\\
&\alpha, a'' \models \neg ST(k, x)\label{E:4l10-1} &&\text{(from \eqref{E:4l00}, \eqref{E:4l7-1}, \eqref{E:4l9-1} by IH)}\\
&\alpha, a'' \models ST(j, y) \wedge \neg ST(k, y)\label{E:4l10-2} &&\text{(from \eqref{E:4l10}, \eqref{E:4l10-1})}\\
&\alpha, a' \models \exists y(R(x, y) \wedge (ST(j, y) \wedge \neg
ST(k, y)))\label{E:4l11} &&\text{(from \eqref{E:4l6} and
\eqref{E:4l10-2})}
\end{align}
The last line contradicts our initial assumption that
\[
\alpha, a' \models \forall y(R(x, y) \to (ST(j, y) \to (ST(k,
y))).
\]
\end{proof}

\begin{definition}\label{D:k-inv}
A formula $\varphi(x)$ is invariant with respect to
$k$-asimulations iff for any $\Sigma'$ such that $\Sigma_\varphi
\subseteq \Sigma' \subseteq \Sigma$, any pointed $\Sigma'$-models
$(M, a)$ and $(N, b)$, if there exists an $\langle (M, a),(N,
b)\rangle_k$-asimulation $A$ and $M, a \models \varphi(x)$, then
$N, b \models \varphi(x)$.
\end{definition}

\begin{example}\label{k-asim-inv}
Consider again models $M$ and $N$ and binary relation $A$ from
Example \ref{k-asim}. Formula $\exists y(R(x,y) \wedge P(y))$ is
true at $(M,a)$, but not at $(N,d)$. So, since for every $k \in
\mathbb{N}$ $A$ is an $\langle (M,a), (N,d)\rangle_k$-asimulation,
we get that there is no $k$ such that this formula is invariant
with respect to $k$-asimulations.
\end{example}

\begin{corollary}\label{L:c-k-inv}
If $\varphi(x)$ is a standard $x$-translation of an intuitionistic
formula and $r(\varphi) = k$, then $\varphi(x)$ is invariant with
respect to $k$-asimulations.
\end{corollary}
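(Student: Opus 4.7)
The plan is to derive this immediately from Lemma \ref{L:asim} by specializing its parameters. Suppose $\varphi(x) = ST(i,x)$ with $r(\varphi) = k$, and assume the hypotheses of Definition \ref{D:k-inv}: there is some $\Sigma'$ with $\Sigma_\varphi \subseteq \Sigma' \subseteq \Sigma$, pointed $\Sigma'$-models $(M,a)$ and $(N,b)$, an $\langle(M,a),(N,b)\rangle_k$-asimulation $A$, and $M, a \models \varphi(x)$.

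To apply Lemma \ref{L:asim}, I would set $l = k$, $\alpha = M$, $\beta = N$, and $m = 0$, so that the sequences $(\bar{a'}_m, a')$ and $(\bar{b'}_m, b')$ reduce to the single-element tuples $(a)$ and $(b)$. The condition $(a)A(b)$ holds by the very definition of $\langle(M,a),(N,b)\rangle_k$-asimulation, the inequality $m + k = 0 + k \leq k = l$ is trivially satisfied, and the premise $\alpha, a' \models \varphi(x)$ is just $M, a \models \varphi(x)$, which we have assumed. The conclusion furnished by the lemma is then $\beta, b' \models \varphi(x)$, i.e. $N, b \models \varphi(x)$, which is exactly what is required by Definition \ref{D:k-inv}.

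Since all the work has already been done in Lemma \ref{L:asim}, no obstacle remains; the corollary amounts to nothing more than checking that the initial-element case of an asimulation instantiates the parameters of the lemma correctly.
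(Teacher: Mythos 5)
Your proof is correct and matches the paper's own argument exactly: the paper likewise derives the corollary from Lemma \ref{L:asim} by setting $\alpha = M$, $\beta = N$, $m = 0$, $l = k$, $a' = a$ and $b' = b$. Nothing further is needed.
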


Corollary \ref{L:c-k-inv} immediately follows from Lemma
\ref{L:asim} setting $\alpha = M$, $\beta = N$, $m = 0$, $l = k$,
$a' = a$ and $b' = b$.

Before we state and prove the parametrized version of our main
result, we need to mention a fact from the classical model theory
of first-order logic.

\begin{lemma}\label{L:fin}
For any finite predicate vocabulary $\Sigma'$, any variable $x$
and any natural $k$ there are, up to logical equivalence, only
finitely many $(\Sigma', x, k)$-formulas.
\end{lemma}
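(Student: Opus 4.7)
The plan is to prove by induction on $k$ a slightly strengthened statement: for any finite $\Sigma' \subseteq \Sigma$ and any finite set $V$ of individual variables there are, up to logical equivalence, only finitely many $\Sigma'$-formulas of degree $\leq k$ all of whose free variables lie in $V$. The lemma is then the special case $V = \{x\}$.

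For the base case $k = 0$ the formulas in question are quantifier-free. Since $\Sigma'$ and $V$ are finite and each predicate letter of $\Sigma'$ has finite arity, the collection $A(V)$ of atomic $\Sigma'$-formulas whose variables lie in $V$ (including equalities $u = v$ with $u, v \in V$) is finite. Every quantifier-free formula with free variables in $V$ is a Boolean combination of members of $A(V)$, and on $|A(V)|$ atoms there are at most $2^{2^{|A(V)|}}$ distinct truth functions, so only finitely many equivalence classes remain.

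For the induction step, assume the statement for $k$ and for every finite set of variables, and fix some $y \notin V$. A straightforward structural induction on $\varphi$ shows that every $\Sigma'$-formula of degree $\leq k+1$ with free variables in $V$ is a Boolean combination of atoms from $A(V)$ together with formulas whose outermost connective is a quantifier. Any such quantified building block has the form $Qz\,\psi$ with $Q \in \{\exists, \forall\}$ and $r(\psi) \leq k$; after $\alpha$-renaming $z$ to the fresh $y$ we obtain an equivalent formula $Qy\,\psi'$ whose body $\psi'$ has free variables in $V \cup \{y\}$ and degree $\leq k$. By the inductive hypothesis applied to $V \cup \{y\}$, there are only finitely many such bodies $\psi'$ up to logical equivalence, hence only finitely many quantified building blocks $Qy\,\psi'$. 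Combining these with $A(V)$ yields a finite stock of building blocks, and Boolean combinations of finitely many formulas again fall into only finitely many equivalence classes, completing the step.

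The one delicate point is the $\alpha$-renaming: one must not attempt to replace every bound variable in $\varphi$ by the single symbol $y$ at once, since this would collapse distinct nested quantifications and change the meaning. The renaming is applied only at the top-level quantified subformulas of the decomposition, and their bodies --- which may themselves contain further nested quantifiers --- are absorbed wholesale into the finite inventory supplied by the inductive hypothesis. With this caveat the remaining work is routine finite counting.
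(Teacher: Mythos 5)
Your proof is correct: the strengthened statement (quantifying over an arbitrary finite set $V$ of free variables), the truth-table count in the base case, and the decomposition into atoms plus top-level quantified blocks with $\alpha$-renamed fresh bound variable are exactly the standard argument. The paper itself does not prove this lemma but cites it as Lemma 3.4 of Ebbinghaus--Flum--Thomas, whose proof proceeds by essentially the same induction on quantifier rank, so your proposal reconstructs the intended proof; the only point you gloss over is that the renaming of $z$ to $y$ must also avoid capture by bound occurrences of $y$ inside the body, which is handled by a preliminary renaming of those inner occurrences and is routine.
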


This fact is proved as Lemma 3.4 in \cite[pp. 189--190]{EFT}.

\begin{definition}\label{D:conj}
Let $\varphi(x)$ be a formula. A conjunction of $(\Sigma_\varphi,
x, k)$-formulas $\Psi(x)$ is called a complete $(\varphi, x,
k)$-conjunction iff (1) every conjunct in  $\Psi(x)$ is a standard
$x$-translation of an intuitionistic formula; and (2) there is a
pointed model $(M, a)$ such that $M, a \models \Psi(x) \wedge
\varphi(x)$ and for any $(\Sigma_\varphi, x, k)$-formula
$\psi(x)$, if $\psi(x)$ is a standard $x$-translation of an
intuitionistic formula and $M, a \models \psi(x)$, then $\Psi(x)
\models\psi(x)$.
\end{definition}

\begin{lemma}\label{L:conj-ex}
For any formula $\varphi(x)$, any natural $k$, any $\Sigma'$ such
that $\Sigma_\varphi \subseteq \Sigma' \subseteq \Sigma$ and any
pointed $\Sigma'$-model $(M, a)$ such that $M, a \models
\varphi(x)$ there is a complete $(\varphi,x, k)$-conjunction
$\Psi(x)$ such that $M, a \models \Psi(x) \wedge \varphi(x)$.
\end{lemma}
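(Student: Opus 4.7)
The plan is to read off the conjuncts of $\Psi(x)$ directly from the theory of $(M,a)$ in a suitably restricted fragment, using Lemma \ref{L:fin} to guarantee that only finitely many are needed. Specifically, I would consider the collection
\[
\Theta = \{\,\psi(x) \mid \psi(x) \text{ is a }(\Sigma_\varphi, x, k)\text{-formula, }\psi(x) = ST(j,x)\text{ for some intuitionistic }j,\text{ and }M,a \models \psi(x)\,\},
\]
and its quotient $\Theta/{\equiv}$ under logical equivalence. By Lemma \ref{L:fin} there are, up to logical equivalence, only finitely many $(\Sigma_\varphi, x, k)$-formulas in total, so $\Theta/{\equiv}$ is finite as well.

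Next I would pick, from each equivalence class in $\Theta/{\equiv}$, one representative which is literally a standard $x$-translation of some intuitionistic formula (which is possible by the very definition of $\Theta$). Call these representatives $\psi_1(x), \ldots, \psi_n(x)$ and set $\Psi(x) := \psi_1(x) \wedge \cdots \wedge \psi_n(x)$. Since this is a finite conjunction of $(\Sigma_\varphi, x, k)$-formulas, $\Psi(x)$ is itself a $(\Sigma_\varphi, x, k)$-formula, and by construction each of its conjuncts is a standard $x$-translation of an intuitionistic formula, so clause (1) of Definition \ref{D:conj} holds.

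For clause (2), the witnessing pointed model is $(M,a)$ itself: every $\psi_i$ is in $\Theta$ and hence true at $(M,a)$, so $M,a \models \Psi(x)$, while $M,a \models \varphi(x)$ is given. Finally, if $\psi(x)$ is any $(\Sigma_\varphi, x, k)$-formula which is a standard translation of an intuitionistic formula and holds at $(M,a)$, then $\psi(x) \in \Theta$, so $\psi(x)$ is logically equivalent to some $\psi_i(x)$, from which $\Psi(x) \models \psi_i(x) \models \psi(x)$. The only real point requiring care is the representative choice in the passage from $\Theta/{\equiv}$ to the actual conjuncts; but since each class in question contains at least one standard translation by construction, a trivial choice works and there is no substantive obstacle.
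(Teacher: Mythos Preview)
Your proposal is correct and follows essentially the same approach as the paper: both collect the set of $(\Sigma_\varphi,x,k)$-formulas that are standard translations of intuitionistic formulas true at $(M,a)$, invoke Lemma~\ref{L:fin} to reduce to finitely many representatives, and take their conjunction. The only minor difference is that the paper explicitly observes the set is non-empty (citing $ST(\bot\to\bot,x)$), whereas you leave this implicit; you may wish to add a word on this, though it does not affect the argument.
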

\begin{proof}
Let $\{\,\psi_1(x)\ldots, \psi_n(x),\ldots\,\}$ be the set of all
$(\Sigma_\varphi, x, k)$-formulas that are standard
$x$-translations of intuitionistic formulas true at $(M, a)$. This
set is non-empty since $ST(\bot \to \bot, x)$ will be true at $(M,
a)$. Due to Lemma \ref{L:fin}, we can choose in this set a
non-empty finite subset $\{\,\psi_{i_1}(x)\ldots,
\psi_{i_n}(x)\,\}$ such that any formula from the bigger set is
logically equivalent to (and hence follows from) a formula in this
subset. Therefore, every formula in the bigger set follows from
$\psi_{i_1}(x) \wedge\ldots \wedge\psi_{i_n}(x)$ and we also have
$M, a \models \psi_{i_1}(x) \wedge\ldots \wedge\psi_{i_n}(x)$,
therefore, $\psi_{i_1}(x) \wedge\ldots \wedge\psi_{i_n}(x)$ is a
complete $(\varphi, x, k)$-conjunction.
\end{proof}

\begin{lemma}\label{L:conj-fin}
For any formula $\varphi(x)$ and any natural $k$ there are, up to
logical equivalence, only finitely many complete $(\varphi, x,
k)$-conjunctions.
\end{lemma}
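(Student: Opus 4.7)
The plan is to reduce the statement directly to the finiteness result of Lemma \ref{L:fin}. By definition, every conjunct of a complete $(\varphi, x, k)$-conjunction is a $(\Sigma_\varphi, x, k)$-formula. Since Lemma \ref{L:fin} guarantees that there are only finitely many such formulas up to logical equivalence, every complete conjunction is equivalent to a conjunction drawn from a fixed finite pool of representatives; the set of such conjunctions is then bounded by the (finite) number of subsets of this pool.

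More concretely, I would first invoke Lemma \ref{L:fin} to fix a finite list $\chi_1(x), \ldots, \chi_N(x)$ of $(\Sigma_\varphi, x, k)$-formulas such that every $(\Sigma_\varphi, x, k)$-formula is logically equivalent to exactly one $\chi_j(x)$ on this list. Given an arbitrary complete $(\varphi, x, k)$-conjunction $\Psi(x)$, each of its conjuncts is a $(\Sigma_\varphi, x, k)$-formula, hence equivalent to some $\chi_j(x)$. Collecting the indices occurring in this way into a set $J \subseteq \{1, \ldots, N\}$, I conclude that $\Psi(x)$ is logically equivalent to $\bigwedge_{j \in J} \chi_j(x)$. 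Therefore, up to logical equivalence, each complete $(\varphi, x, k)$-conjunction is determined by a subset $J$ of $\{1, \ldots, N\}$, and there are at most $2^N$ such subsets.

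I do not expect any real obstacle in carrying this out: the proof is essentially a one-line pigeonhole argument on top of Lemma \ref{L:fin}. The only minor subtlety worth flagging is that the representatives $\chi_j(x)$ need not themselves be standard translations of intuitionistic formulas; they function purely as a finite bookkeeping device indexing equivalence classes of $(\Sigma_\varphi, x, k)$-formulas. Since logical equivalence is preserved under conjunction, the equivalence between $\Psi(x)$ and $\bigwedge_{j \in J} \chi_j(x)$ is still a genuine logical equivalence, which is all the lemma demands.
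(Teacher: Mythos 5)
Your proposal is correct and rests on the same key fact as the paper, namely Lemma \ref{L:fin}; the paper's own proof is just slightly more direct: it observes that a complete $(\varphi,x,k)$-conjunction is \emph{itself} a $(\Sigma_\varphi,x,k)$-formula (since $r(\psi \wedge \chi) = \max(r(\psi), r(\chi))$ and the vocabulary is unchanged), so Lemma \ref{L:fin} applies to the conjunctions directly without passing through representatives of the conjuncts. Your conjunct-by-conjunct bookkeeping with the $2^N$ subset bound is a sound, if marginally longer, way to reach the same conclusion, and your remark that the representatives $\chi_j(x)$ need not be standard translations is a fair point but harmless for exactly the reason you give.
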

\begin{proof}
It suffices to observe that for any formula $\varphi(x)$ and any
natural $k$, a complete $(\varphi, x, k)$-conjunction is a
$(\Sigma_\varphi, x, k)$-formula. Our lemma then follows from
Lemma \ref{L:fin}.
\end{proof}

In what follows we adopt the following notation for the fact that
for any variable $x$ all $(\Sigma_\varphi, x, k)$-formulas that
are standard $x$-translations of intuitionistic formulas true at
$(M, a)$, are also true at $(N, b)$:
\[
(M, a) \leq_{\varphi,k} (N, b).
\]

\begin{theorem}\label{L:t1}
Let $r(\varphi(x)) = k$ and let $\varphi(x)$ be invariant with
respect to $k$-asimulations. Then $\varphi(x)$ is equivalent to a
standard $x$-translation of an intuitionistic formula.
\end{theorem}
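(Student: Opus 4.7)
The plan is to prove that $\varphi$ is equivalent to the disjunction $\Psi^{*}(x)$ of all (representatives of) complete $(\varphi,x,k)$-conjunctions; by Lemma \ref{L:conj-fin} this set is finite up to logical equivalence, so $\Psi^{*}$ is a finite disjunction of finite conjunctions of standard $x$-translations of intuitionistic formulas, and is therefore itself equivalent to a standard $x$-translation of an intuitionistic formula (via the corresponding $\vee$-$\wedge$-combination of the underlying intuitionistic formulas).

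The direction $\varphi\models\Psi^{*}$ is immediate from Lemma \ref{L:conj-ex}. For the converse $\Psi^{*}\models\varphi$, suppose $(N,b)\models\Psi$ for some complete $(\varphi,x,k)$-conjunction $\Psi$, and fix $(M,a)\models\Psi\wedge\varphi$ from the definition. Completeness of $\Psi$ gives $(M,a)\leq_{\varphi,k}(N,b)$. The strategy is then to produce an $\langle(M,a),(N,b)\rangle_{k}$-asimulation $A$ and invoke the invariance hypothesis on $\varphi$ to conclude $(N,b)\models\varphi$.

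To build $A$, I would put $(a,b)$ into it and, more generally, declare $(\bar{c}_{m},c)A(\bar{d}_{m},d)$ when the rank-$(k-m)$ intuitionistic theory (in $\Sigma_\varphi$) of the $\alpha$-side's final element is included in that of the $\beta$-side's final element at $m=0$, and the two theories coincide for $m\geq 1$, matching the one-way base and two-way extension pattern in \eqref{E:c2}. Condition \eqref{E:c1} is immediate since atomic formulas have rank zero. For \eqref{E:c2}, given a $\beta$-side successor $b''$ and $m<k$, I would assume for contradiction that no matching $\alpha$-side successor $c''$ exists; by Lemma \ref{L:fin}, pick finite representatives $j_{1},\ldots,j_{n}$ of intuitionistic formulas of rank $\leq k-m-1$ true at $b''$ and $j'_{1},\ldots,j'_{r}$ of those false at $b''$, and set $J_{1}=\bigwedge_{i}j_{i}$, $J_{2}=\bigvee_{i}j'_{i}$. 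Every candidate $c''$ must then falsify $J_{1}$ or verify $J_{2}$, so the $\alpha$-side's current element satisfies $ST(J_{1}\to J_{2},y)$ --- a formula of rank $\leq k-m$ --- and hence, by the theory-matching built into $A$, the $\beta$-side's current element does too, contradicting $b''\models ST(J_{1},y)\wedge\neg ST(J_{2},y)$.

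I expect the technical heart of the argument to be exactly this last extension step: handling the bottom of the recursion ($m=k-1$, where only rank-$0$ intuitionistic formulas are available) and the degenerate cases in which $J_{1}$ or $J_{2}$ is empty (requiring trivial substitutes such as $\bot\to\bot$ or $\bot$) demands care so that $r(ST(J_{1}\to J_{2},y))$ stays within $k-m$, and the asymmetry between the base $m=0$ and the extended levels $m\geq 1$ of $A$ has to be reconciled so that both directions required by \eqref{E:c2} are secured simultaneously in the extended pair.
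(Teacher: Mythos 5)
Your overall architecture matches the paper's: one direction of the equivalence comes from Lemmas \ref{L:conj-ex} and \ref{L:conj-fin}, the other from building a $k$-asimulation out of rank-bounded inclusions of intuitionistic theories and appealing to invariance. The gap is in the calibration of the rank parameter, and it sits exactly where you say the argument ``demands care'': that care cannot be supplied within the budget you have allotted. Since you define $A$ at level $m$ via the rank-$(k-m)$ intuitionistic theories, your $\Gamma$ and $\Delta$ must consist of formulas of rank $\leq k-m-1$ so that $ST(J_1\to J_2,y)$ stays within rank $k-m$. At the bottom level $m=k-1$ this leaves only rank-$0$ formulas, i.e.\ positive $\wedge/\vee$-combinations of atoms and $\bot$. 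If the successor $b''$ satisfies no atom of $\Sigma_\varphi$, then $\Gamma$ is empty and there is no admissible substitute for $J_1$: $\bot\to\bot$ has translation rank $1$ and pushes the implication to rank $2>k-m=1$, and no rank-$0$ standard translation is a tautology. Worse, the property you would need to contradict --- ``every $R$-successor of $a'$ satisfies some atom'' --- is not expressible by any rank-$1$ translation $ST(j\to j',x)$, because for rank-$0$ $j$ the antecedent $ST(j,y)$ is automatically false at an atom-free point, so such an implication can never be falsified by an atom-free witness. Concretely, if $a'$ has a single successor satisfying $P$ while $b'$ has one successor satisfying $P$ and one satisfying nothing, the rank-$1$ theories of $a'$ and $b'$ can coincide and yet no admissible $a''$ exists for the atom-free $b''$; so the extension step genuinely fails, it is not a routine detail.

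The fix is the one the paper adopts: decouple the \emph{depth} of the asimulation (which must be $k$ so that the invariance hypothesis applies) from the \emph{rank} of the theories used to define it, and pad the latter. The paper works with complete $(\varphi,x,k+2)$-conjunctions and sets $(\bar{a'}_m,a')A(\bar{b'}_m,b')$ iff $m\leq k$ and $(\alpha,a')\leq_{\varphi,k-m+2}(\beta,b')$; then $\Gamma$ and $\Delta$ live at rank $k+1-m\geq 2$, so they always contain $ST(\bot\to\bot,x)$ and $ST(\bot,x)$ respectively, and the resulting implication, of rank $k+2-m$, is still covered by the inclusion at the current level. This change propagates back to your first step: the disjunction must be taken over complete $(\varphi,x,k+2)$-conjunctions rather than $(\varphi,x,k)$-conjunctions --- your claim that $\varphi$ is equivalent to the disjunction of its complete rank-$k$ conjunctions is a rank-preservation statement that your argument does not establish and that the theorem does not require. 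A minor further difference: the paper uses a one-way theory inclusion at every level and extracts both pairs demanded by \eqref{E:c2} from the construction of $a''$ itself, which achieves the same effect as your base/extension asymmetry.
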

\begin{proof}
We may assume that both $\varphi(x)$ and $\neg\varphi(x)$ are
satisfiable, since both $\bot$ and $\top$ are obviously invariant
with respect to $k$-asimulations and we have, for example, the
following valid formulas:

\begin{align*}
&\bot \leftrightarrow ST(\bot,x), \top \leftrightarrow ST(\bot \to
\bot, x).
\end{align*}

We  may also assume that there are two complete $(\varphi, x, k +
2)$-conjunctions $\Psi(x), \Psi'(x)$ such that
$\Psi'(x)\models\Psi(x)$, and both formulas $\Psi(x) \wedge
\varphi(x)$ and $\Psi'(x) \wedge \neg\varphi(x)$ are satisfiable.

For suppose otherwise. Then take the set of all complete
$(\varphi, x, k + 2)$-conjunctions $\Psi(x)$ such that the formula
$\Psi(x) \wedge \varphi(x)$ is satisfiable. This set is non-empty,
because $\varphi(x)$ is satisfiable, and by Lemma \ref{L:conj-ex},
it can be satisfied only together with some complete $(\varphi, x,
k + 2)$-conjunction. Now, using Lemma \ref{L:conj-fin}, choose in
it a finite non-empty subset $\{\,\Psi_{i_1}(x)\ldots,
\Psi_{i_n}(x)\,\}$ such that any complete $(\varphi, x, k +
2)$-conjunction is equivalent to an element of this subset. We can
show that $\varphi(x)$ is logically equivalent to
$\Psi_{i_1}(x)\vee\ldots \vee\Psi_{i_n}(x)$. In fact, if $M, a
\models \varphi(x)$ then, by Lemma \ref{L:conj-ex}, at least one
complete $(\varphi, x, k + 2)$-conjunction is true at $(M, a)$ and
therefore, its equivalent in $\{\,\Psi_{i_1}(x)\ldots,
\Psi_{i_n}(x)\,\}$ is also true at $(M, a)$, and so, finally we
have $M, a \models \Psi_{i_1}(x)\vee\ldots \vee\Psi_{i_n}(x)$. In
the other direction, if $M, a \models \Psi_{i_1}(x)\vee\ldots
\vee\Psi_{i_n}(x)$, then for some $1 \leq j \leq n$ we have $M, a
\models \Psi_{i_j}(x)$. Then, since
$\Psi_{i_j}(x)\models\Psi_{i_j}(x)$ and by the choice of
$\Psi_{i_j}(x)$ the formula $\Psi_{i_j}(x) \wedge \varphi(x)$ is
satisfiable, so, by our assumption, the formula $\Psi_{i_j}(x)
\wedge \neg\varphi(x)$ must be unsatisfiable, and hence
$\varphi(x)$ must follow from $\Psi_{i_j}(x)$. But in this case we
will have $M, a \models \varphi(x)$ as well. So $\varphi(x)$ is
logically equivalent to $\Psi_{i_1}(x)\vee\ldots,
\vee\Psi_{i_n}(x)$ but the latter formula, being a disjunction of
conjunctions of standard $x$-translations of intuitionistic
formulas, is itself a standard $x$-translation of an
intuitionistic formula, and so we are done.

If, on the other hand, one can take two complete $(\varphi, x, k +
2)$-conjunctions $\Psi(x), \Psi'(x)$ such that
$\Psi'(x)\models\Psi(x)$, and formulas $\Psi(x) \wedge \varphi(x)$
and $\Psi'(x) \wedge \neg\varphi(x)$ are satisfiable, we reason as
follows. Take a pointed $\Sigma_\varphi$-model $(M,a)$ such that
$M, a \models \Psi(x) \wedge \varphi(x)$ and for any
$(\Sigma_\varphi, x, k + 2)$-formula $\psi(x)$, if $\psi(x)$ is a
standard $x$-translation of an intuitionistic formula true at
$(M,a)$, then $\psi(x)$ follows from $\Psi(x)$, and take any
pointed model $(N, b)$ such that $N, b \models \Psi'(x) \wedge
\neg\varphi(x)$.

We can construct an $\langle (M,a), (N, b)\rangle_k$-asimulation
and thus obtain a contradiction in the following way.

Let $\alpha, \beta \in \{\,M, N\,\}$ and let $(\bar{a'}_m, a')$
and $(\bar{b'}_m,b')$ be in $D(\alpha)^{m+1}$ and
$D(\beta)^{m+1}$, respectively. Then
\[
(\bar{a'}_m,a')A(\bar{b'}_m,b') \Leftrightarrow (m \leq k \wedge
(\alpha, a') \leq_{\varphi,k - m + 2} (\beta, b')).
\]

By choice of $\Psi(x), \Psi'(x)$ and the independence of truth at
a pointed model from the choice of a single free variable in a
formula we obviously have $(a)A(b)$.

Further, since the degree of any atomic formula is $0$, and the
above condition implies that $k - m + 2 \geq 2$, it is evident
that for any $(\bar{a'}_m,a')A(\bar{b'}_m,b')$ and any predicate
letter $P\in\Sigma_\varphi$ we have $\alpha,a' \models P(x)
\Rightarrow \beta, b' \models P(x)$.

To verify condition \eqref{E:c2}, take any
$(\bar{a'}_m,a')A(\bar{b'}_m,b')$ such that $m < k$ and any $b''
\in D(\beta)$ such that $b'R^\beta b''$. In this case we will also
have $m + 1 \leq k$.

Then consider the following two sets:
\begin{align*}
&\Gamma = \{\,ST(i, x) \mid ST(i, x)\text{ is a $(\Sigma_\varphi,
x, k
+ 1 - m)$-formula, and }\beta, b'' \models ST(i, x)\,\};\\
&\Delta = \{\,ST(i, x) \mid ST(i, x)\text{ is a $(\Sigma_\varphi,
x, k + 1 - m)$-formula, and }\beta, b'' \models \neg ST(i, x)\,\}.
\end{align*}
These sets are non-empty, since by our assumption we have $k + 1 -
m \geq 1$. Therefore, as we have $r(ST(\bot, x)) = 0$ and
$r(ST(\bot \to \bot, x)) = 1$, we will also have $ST(\bot, x) \in
\Delta$ and $ST(\bot \to \bot, x) \in \Gamma$. Then, according to
our Lemma \ref{L:fin}, there are finite non-empty sets of logical
equivalents for both $\Gamma$ and $\Delta$. Choosing these finite
sets, we in fact choose some finite $\{\,ST(i_1,x)\ldots ST(i_t,
x)\,\} \subseteq \Gamma$, $\{\,ST(j_1,x)\ldots ST(j_u, x)\,\}
\subseteq \Delta$ such that
\begin{align*}
&\forall \psi(x) \in \Gamma(ST(i_1,x)\wedge\ldots \wedge ST(i_t,
x)
\models \psi(x));\\
&\forall \chi(x) \in \Delta(\chi(x)\models ST(j_1,x)\vee\ldots
\vee ST(j_u, x)).
\end{align*}
But then we obtain that the formula
\[
ST((i_1\wedge\ldots \wedge i_t) \to (j_1\vee\ldots \vee j_u), x)
\]
is false at $(\beta, b')$. In fact, $b''$ disproves this
implication for $(\beta, b')$. But every formula both in
$\{\,ST(i_1,x)\ldots ST(i_t, x)\,\}$ and $\{\,ST(j_1,x)\ldots
ST(j_u, x)\,\}$ is, by their choice, a $(\Sigma_\varphi, x, k + 1
- m)$-formula, and so the implication under consideration must be
a $(\Sigma_\varphi,x,k + 2 - m)$-formula. Note, further, that by
$(\bar{a'}_m,a')A(\bar{b'}_m,b')$ we have
\[ (\alpha, a')
\leq_{\varphi,k - m + 2} (\beta, b')
\]
and therefore this
implication must be false at $(\alpha, a')$ as well. But then take
any $a'' \in D(\alpha)$ such that $a'R^\alpha a''$ and $a''$
verifies the conjunction in the antecedent of the formula but
falsifies its consequent. We must conclude then, by the choice of
$\{\,ST(i_1,x)\ldots ST(i_t, x)\,\}$, that $\alpha, a'' \models
\Gamma$ and so, by the definition of $A$, and given that $m + 1
\leq k$, that $(\bar{b'}_m,b', b'')A(\bar{a'}_m,a', a'')$. Since,
in addition, $a''$ falsifies every formula from
$\{\,ST(j_1,x)\ldots ST(j_u, x)\,\}$, then, by the choice of this
set, we must conclude that every $(\Sigma_\varphi,x,k + 1 -
m)$-formula that is a standard $x$-translation of an
intuitionistic formula false at $(\beta, b'')$ is also false at
$(\alpha, a'')$. But then, again by the definition of $A$, and
given the fact that $m + 1 \leq k$, we must also have
$(\bar{a'}_m,a', a'')A(\bar{b'}_m,b', b'')$, and so condition
\eqref{E:c2} holds.

Therefore $A$ is an $\langle (M,a), (N, b)\rangle_k$-asimulation
and we have got our contradiction in place.
\end{proof}

\begin{theorem}\label{L:param}
A formula $\varphi(x)$ is equivalent to a standard $x$-translation
of an intuitionistic formula iff there exists a $k \in \mathbb{N}$
such that $\varphi(x)$ is invariant with respect to
$k$-asimulations.
\end{theorem}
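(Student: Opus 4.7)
The plan is to handle the two directions separately, with almost all the substantive work already carried out in the earlier results.

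For the forward direction, suppose $\varphi(x)$ is logically equivalent to $ST(i,x)$ for some intuitionistic formula $i$, and set $k = r(ST(i,x))$. Corollary~\ref{L:c-k-inv} yields that $ST(i,x)$ is invariant with respect to $k$-asimulations. Invariance with respect to $k$-asimulations, as spelled out in Definition~\ref{D:k-inv}, is a purely semantic property about truth at pointed models, so it is preserved under logical equivalence; hence $\varphi(x)$ is also invariant with respect to $k$-asimulations.

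For the backward direction, suppose $\varphi(x)$ is invariant with respect to $k$-asimulations for some $k \in \mathbb{N}$. I would first record a monotonicity observation: inspecting Definition~\ref{D:k-asim}, the forth condition \eqref{E:c2} is only demanded when $m < k$, so every $k'$-asimulation with $k' \geq k$ is automatically a $k$-asimulation. Consequently, invariance with respect to $k$-asimulations implies invariance with respect to $k'$-asimulations for every $k' \geq k$, and I may enlarge $k$ to assume $k \geq r(\varphi)$.

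To fit Theorem~\ref{L:t1}, whose statement demands $r(\varphi) = k$ exactly, I would replace $\varphi(x)$ by the logically equivalent formula $\varphi'(x) := \varphi(x) \wedge \forall y_1 \ldots \forall y_k(y_1 = y_1)$, which has degree exactly $k$ and, being equivalent to $\varphi(x)$, remains invariant with respect to $k$-asimulations. Theorem~\ref{L:t1} then produces an intuitionistic formula $i$ with $\varphi'(x) \equiv ST(i,x)$, whence $\varphi(x) \equiv ST(i,x)$ and we are done. There is no genuine obstacle here; the only subtlety is the bookkeeping needed to align the degree of $\varphi$ with the asimulation parameter $k$, and the real content of the theorem sits in Corollary~\ref{L:c-k-inv} and Theorem~\ref{L:t1}.
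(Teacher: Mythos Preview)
Your proposal is correct and follows essentially the same approach as the paper: the forward direction invokes Corollary~\ref{L:c-k-inv}, and the backward direction combines the monotonicity observation (a $k'$-asimulation for $k'\geq k$ is a $k$-asimulation) with a padding trick to adjust the quantifier depth before applying Theorem~\ref{L:t1}. The only cosmetic differences are that the paper splits into the two cases $k \leq r(\varphi)$ and $k > r(\varphi)$ rather than first enlarging $k$, and it pads by prefixing vacuous quantifiers $\forall\bar{y}_l\varphi(x)$ rather than conjoining a tautology of the required depth; neither difference is substantive.
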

\begin{proof}
Let $\varphi(x)$ be equivalent to $ST(i,x)$. Then by Corollary
\ref{L:c-k-inv}, $ST(i,x)$ is invariant with respect to
$r(ST(i,x))$-asimulations, and, therefore, so is $\varphi(x)$. In
the other direction, let $\varphi(x)$ be invariant with respect to
$k$-asimulations for some $k$. If $k \leq r(\varphi)$, then every
$r(\varphi)$-asimulation is  $k$-asimulation, so $\varphi(x)$ is
invariant with respect to $r(\varphi)$-asimulations and hence, by
Theorem \ref{L:t1}, $\varphi(x)$ is equivalent to a standard
$x$-translation of an intuitionistic formula. If, on the other
hand, $r(\varphi) < k$, then set $l = k - r(\varphi)$ and consider
variables $\bar{y}_l$ not occurring in $\varphi(x)$. Then
$r(\forall\bar{y}_l\varphi(x)) = k$ and $\varphi(x)$ is logically
equivalent to $\forall\bar{y}_l\varphi(x)$, so the latter formula
is also invariant with respect to $k$-asimulations, and hence by
Theorem \ref{L:t1} $\forall\bar{y}_l\varphi(x)$ is logically
equivalent to a standard $x$-translation of an intuitionistic
formula. But then $\varphi(x)$ is equivalent to this standard
$x$-translation as well.
\end{proof}

\section{The main result}\label{S:Main}

We begin with a definition of an `intuitionistic' counterpart to
bisimulation:
\begin{definition}\label{D:asim}
Let $\Sigma' \subseteq \Sigma$, $R^2 \in \Sigma'$, $(M, a)$, $(N,
b)$ be two pointed $\Sigma'$-models. A binary relation
\[
A \subseteq (D(M) \times D(N)) \cup (D(N)\times D(M)),
\]
is called $\langle (M,a), (N,b)\rangle$-asimulation iff $aAb$ and
for any $\alpha, \beta \in \{\,M, N\,\}$, any $a' \in D(\alpha)$,
$b' \in D(\beta)$ whenever we have $a'Ab'$, the following
conditions hold:

\begin{align}
&\forall P \in \Sigma'(\alpha, a' \models P(x) \Rightarrow \beta, b' \models P(x))\label{E:c11}\\
&b'' \in D(\beta) \wedge b'R^\beta b''\Rightarrow \exists a'' \in
D(\alpha)(a'R^\alpha a'' \wedge b''Aa'' \wedge
a''Ab'')\label{E:c22}
\end{align}
\end{definition}
\begin{example}\label{asim}
Consider again models $M$ and $N$ from Example \ref{k-asim}.
Binary relation $B = \{\,(a,d),(b,e),(e,b)\,\}$ is an $\langle
(M,a), (N,d)\rangle$-asimulation.
\end{example}
\begin{lemma}\label{L:k-asim1}
Let $A$ be an $\langle (M,a), (N,b)\rangle$-asimulation, and let
\[
A' = \{\,\langle(\bar{c}_n,c'),(\bar{d}_n,d')\rangle\mid
c'Ad'\,\}.
\]
Then $A'$ is an $\langle (M,a), (N, b)\rangle_k$-asimulation for
any $k \in \mathbb{N}$.
\end{lemma}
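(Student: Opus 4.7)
The plan is to observe that $A'$ essentially ignores all tuple entries except the last one, and then to check each clause of Definition \ref{D:k-asim} by reducing it to the corresponding clause of Definition \ref{D:asim} applied to the last coordinates.

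First I would check the base clause: since $aAb$ by hypothesis, the definition of $A'$ gives $(a)A'(b)$ immediately (using the length-$1$ case $n=0$, so that the ``tuples'' are just $a$ and $b$). Second, suppose $(\bar{a'}_m, a')A'(\bar{b'}_m, b')$ for some $\alpha, \beta \in \{M, N\}$ and tuples in $D(\alpha)^{m+1}$, $D(\beta)^{m+1}$. By definition of $A'$ this means $a'Ab'$, so clause \eqref{E:c11} of Definition \ref{D:asim} yields $\forall P \in \Sigma'(\alpha, a' \models P(x) \Rightarrow \beta, b' \models P(x))$, which is exactly \eqref{E:c1}.

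For the back-and-forth clause \eqref{E:c2}, assume further that $m < k$ (this assumption is actually not used, which is why the statement works for \emph{every} $k$) and that $b'' \in D(\beta)$ with $b' R^\beta b''$. From $a'Ab'$ and clause \eqref{E:c22} applied to $A$, we obtain some $a'' \in D(\alpha)$ with $a' R^\alpha a''$, $b''Aa''$, and $a''Ab''$. Unpacking the definition of $A'$ from these two last-coordinate relations, we get $(\bar{b'}_m, b', b'')A'(\bar{a'}_m, a', a'')$ and $(\bar{a'}_m, a', a'')A'(\bar{b'}_m, b', b'')$, which is precisely what \eqref{E:c2} demands.

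There is really no main obstacle here: the lemma is a straightforward ``lifting'' of a binary asimulation to a relation on tuples of arbitrary length, and every condition of Definition \ref{D:k-asim} follows from the matching condition of Definition \ref{D:asim} applied to the last entries. The only minor point to keep in mind is that the bound $m < k$ plays no role in the verification, which justifies the quantifier ``for any $k \in \mathbb{N}$'' in the statement.
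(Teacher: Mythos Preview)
Your proof is correct and follows essentially the same route as the paper's own argument: verify $(a)A'(b)$ from $aAb$, reduce condition \eqref{E:c1} to \eqref{E:c11} via the observation that $(\bar{a'}_m,a')A'(\bar{b'}_m,b')$ implies $a'Ab'$, and reduce \eqref{E:c2} to \eqref{E:c22} by extending the witness $a''$ to the longer tuples. Your added remark that the hypothesis $m<k$ is never used is exactly the paper's observation that condition \eqref{E:c2} ``is fulfilled for every $k$''.
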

\begin{proof}
We obviously have $(a)A'(b)$, and since for any $\alpha, \beta \in
\{\,M, N\,\}$, and any $(\bar{c}_n,c')$ in $D(\alpha)^{n+1}$,
$(\bar{d}_n,d')$ in $D(\beta)^{n+1}$ such that
$(\bar{c}_n,c')A'(\bar{d}_n,d')$ we have $c'Ad'$, condition
\eqref{E:c1} for $A'$ follows from the fulfilment of condition
\eqref{E:c11} for $A$. Also, if $(\bar{c}_n,c')A'(\bar{d}_n,d')$
then $c'Ad'$, and if, further, $d'' \in D(\beta)$ and $d'R^\beta
d''$ then by condition \eqref{E:c22} we can choose $c'' \in
D(\alpha)$ such that $c'R^\alpha c''$, $c''Ad''$ and $d''Ac''$.
But then, by definition of $A'$ we will also have
$(\bar{c}_n,c',c'')A'(\bar{d}_n,d',d'')$ and
$(\bar{d}_n,d',d'')A'(\bar{c}_n,c',c'')$ so condition \eqref{E:c2}
for $A'$ is fulfilled for every $k$.
\end{proof}
\begin{definition}\label{D:inv}
A formula $\varphi(x)$ is invariant with respect to asimulations
iff for any $\Sigma'$ such that $\Sigma_\varphi \subseteq \Sigma'
\subseteq \Sigma$, any pointed $\Sigma'$-models $(M, a)$ and $(N,
b)$, if there exists an $\langle (M, a),(N, b)\rangle$-asimulation
$A$ and $M, a \models \varphi(x)$, then $N, b \models \varphi(x)$.
\end{definition}
\begin{example}\label{ex-inv}
Consider again models $M$ and $N$ from Example \ref{k-asim}. Since
$\exists y(R(x,y) \wedge P(y))$ is true at $(M,a)$, but not at
$(N,d)$, the fact that binary relation $B$ from Example \ref{asim}
is an $\langle (M,a), (N,d)\rangle$-asimulation means that this
formula is not invariant with respect to asimulations.
\end{example}
\begin{corollary}\label{L:c-inv}
If $\varphi(x)$ is equivalent to a standard $x$-translation of an
intuitionistic formula, then $\varphi(x)$ is invariant with
respect to asimulations.
\end{corollary}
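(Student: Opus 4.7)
The approach is to reduce the claim to the parametrized invariance result already proved. Assume $\varphi(x)$ is equivalent to $ST(i,x)$ for some intuitionistic formula $i$, and set $k := r(ST(i,x))$. Suppose $\Sigma_\varphi \subseteq \Sigma' \subseteq \Sigma$, $(M,a)$ and $(N,b)$ are pointed $\Sigma'$-models, $A$ is an $\langle (M,a),(N,b)\rangle$-asimulation, and $M,a \models \varphi(x)$; the goal is to conclude $N,b \models \varphi(x)$.

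Before assembling the argument I would dispose of one bookkeeping point: Corollary~\ref{L:c-k-inv} applies to $ST(i,x)$ over vocabularies extending $\Sigma_{ST(i,x)}$, so I must know that $\Sigma_{ST(i,x)} \subseteq \Sigma'$. Since $\varphi(x)$ and $ST(i,x)$ are logically equivalent and $\varphi(x)$ does not mention the letters in $\Sigma_{ST(i,x)} \setminus \Sigma_\varphi$, any such letter is semantically inert in $ST(i,x)$ as well, and so it can be eliminated from $i$ without affecting the translation; thus we may assume without loss of generality that $\Sigma_{ST(i,x)} \subseteq \Sigma_\varphi \subseteq \Sigma'$.

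The substantive step is then the composition of the two results already available. Lemma~\ref{L:k-asim1} lifts $A$ to the tuple-level relation $A' = \{\,\langle(\bar c_n,c'),(\bar d_n,d')\rangle \mid c' A d'\,\}$, and the lemma guarantees that $A'$ is an $\langle (M,a),(N,b)\rangle_k$-asimulation for this particular $k$ (indeed for every $k$). Since $\varphi(x) \equiv ST(i,x)$, the hypothesis $M,a \models \varphi(x)$ gives $M,a \models ST(i,x)$, and Corollary~\ref{L:c-k-inv} applied to $A'$ yields $N,b \models ST(i,x)$, hence $N,b \models \varphi(x)$, as required.

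I expect no serious obstacle: the corollary is essentially the composition of Lemma~\ref{L:k-asim1} with Corollary~\ref{L:c-k-inv}, the only delicate point being the vocabulary matching handled in the second paragraph.
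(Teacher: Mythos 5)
Your proof is correct and follows essentially the same route as the paper: both lift the asimulation $A$ to the tuple-level relation $A'$ via Lemma~\ref{L:k-asim1} and then invoke the parametrized invariance of standard translations (the paper phrases this as a contradiction with Theorem~\ref{L:param}, while you apply Corollary~\ref{L:c-k-inv} directly, which amounts to the same thing). Your explicit treatment of the vocabulary mismatch between $\Sigma_{ST(i,x)}$ and $\Sigma_\varphi$ is a point the paper silently glosses over, and it is handled correctly.
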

\begin{proof}
Let $\varphi(x)$ be equivalent to a standard $x$-translation of an
intuitionistic formula, let $A$ be an $\langle (M, a),(N,
b)\rangle$-asimulation and let $A'$ be defined as in Lemma
\ref{L:k-asim1}. Then by this Lemma $A'$ is an $\langle (M, a),(N,
b)\rangle_k$-asimulation for every $k$. So if we have $M, a
\models \varphi(x)$, but not $N, b \models \varphi(x)$, then
$\varphi(x)$ is not invariant with respect to $k$-asimulations for
any $k$, which is in contradiction with Theorem \ref{L:param}.
\end{proof}

In what follows we will also need some notions and facts from
model theory of modal propositional logic. Thus, standard modal
$x$-translation $Tr(m, x)$ of a modal propositional formula $m$ in
first-order logic is defined by the following induction on the
complexity of modal propositional formula:
\begin{align*}
&Tr(p_n, x) = P_n(x);\\
&Tr(m \wedge m', x) = Tr(m, x) \wedge
Tr(m', x);\\
&Tr(\neg m, x) = \neg Tr(m, x);\\
&Tr(\Box m, x) = \forall y(R(x, y) \to Tr(m, y)).
\end{align*}

Another important idea is the notion of bisimulation:
\begin{definition}\label{D:bisim}
Let $\Sigma'$ be a predicate vocabulary such that $\Sigma'
\subseteq \Sigma$, $R^2 \in \Sigma'$, and $(M, a)$, $(N, b)$ be
pointed $\Sigma'$-models. Then a binary relation $E \subseteq D(M)
\times D(N)$ is a $\langle (M, a),(N, b)\rangle$-bisimulation iff
$aEb$ and for any $a' \in M$, $b' \in N$, whenever $a'Eb'$, the
following conditions hold:
\begin{align}
&\forall P \in \Sigma'(M, a' \models P(x) \Leftrightarrow N, b' \models P(x));\label{E:co1}\\
&(a'' \in D(M) \wedge a'R^Ma'') \Rightarrow \exists b'' \in
D(N)(b'R^Nb'' \wedge a''Eb'');\label{E:co2}\\
&(b'' \in D(N) \wedge b'R^Nb'') \Rightarrow \exists a'' \in
D(M)(a'R^Ma'' \wedge a''Eb'').\label{E:co3}
\end{align}
\end{definition}

\begin{definition}\label{D:bisim-inv}
A formula $\varphi(x)$ is invariant with respect to bisimulations
iff for any $\Sigma'$ such that $\Sigma_\varphi \subseteq \Sigma'
\subseteq \Sigma$, any pointed $\Sigma'$-models $(M, a)$ and $(N,
b)$, and any $\langle (M, a),(N, b)\rangle$-bisimulation it is
true that
\[
M, a \models \varphi(x) \Rightarrow N, b \models \varphi(x).
\]
\end{definition}
The concept of standard modal translation and that of bisimulation
invariance are tied together by  Van Benthem's famous modal
characterization theorem:
\begin{theorem}\label{L:vB}
A formula $\varphi(x)$ is invariant with respect to bisimulations
iff it is equivalent to a standard modal $x$-translation of a
modal propositional formula.
\end{theorem}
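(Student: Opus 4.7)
The plan is to prove both directions of the biconditional. The soundness direction—that any standard modal translation $Tr(m,x)$ is bisimulation-invariant—I would establish by straightforward induction on the complexity of $m$. The atomic case uses the symmetric atomic clause \eqref{E:co1}; the Boolean cases are routine; and the $\Box$ case uses the two zig-zag conditions \eqref{E:co2} and \eqref{E:co3} in exactly the way one expects, invoking both directions to handle $\forall y(R(x,y)\to Tr(m',y))$.

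For the substantive direction, I would argue by compactness. Assume $\varphi(x)$ is invariant under bisimulations, and suppose without loss of generality that $\varphi(x)$ is satisfiable. Let $T(x) = \{Tr(m,x) \mid \varphi(x) \models Tr(m,x)\}$ be the set of modal consequences of $\varphi$. The key claim is that $T(x) \models \varphi(x)$; once this is in hand, compactness yields a finite $T_0 \subseteq T$ whose conjunction implies $\varphi(x)$, and since $\varphi(x)$ already entails every element of $T$, the conjunction of $T_0$ is equivalent to $\varphi(x)$. The conjunction of finitely many modal translations is itself a modal translation (via $\wedge$), so we are done.

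To establish $T(x) \models \varphi(x)$, take any pointed model $(N,b)$ satisfying $T(x)$. I would show first, by compactness, that there exists $(M,a)$ with $M,a \models \varphi(x)$ such that $(M,a)$ and $(N,b)$ are modally equivalent (i.e., satisfy exactly the same modal translations at their distinguished points). If no such $(M,a)$ existed, then $\varphi(x)$ together with $\{Tr(m,x) \mid N,b \models Tr(m,x)\}$ would be unsatisfiable, giving by compactness a finite subset whose conjunction $Tr(m^*,x)$ is refuted by $\varphi$; but then $\varphi \models Tr(\neg m^*, x)$, so $Tr(\neg m^*, x) \in T(x)$, contradicting $N,b \models T(x)$.

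The main obstacle, and the decisive step, is converting modal equivalence into an actual bisimulation so that invariance applies. For this I would pass to $\omega$-saturated elementary extensions $(M^+, a)$ and $(N^+, b)$ of $(M,a)$ and $(N,b)$ respectively. On $\omega$-saturated models, the relation of modal equivalence is itself a bisimulation: given $a'Eb'$ and a successor $a''$ of $a'$, the set of formulas asserting that a hypothetical $R$-successor $y$ of $b'$ realizes the full modal type of $a''$ is finitely satisfiable at $b'$ (because $a'$ and $b'$ agree on all $\Box$-prefixed modal formulas), so saturation supplies the required $b''$; the symmetric clause is analogous. Since $M,a \models \varphi(x)$ and $\varphi$ is preserved under elementary extensions, $M^+, a \models \varphi(x)$; bisimulation invariance transfers this to $N^+, b$; and elementary equivalence of $(N^+,b)$ and $(N,b)$ yields $N,b \models \varphi(x)$, completing the proof.
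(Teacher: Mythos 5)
Your argument is correct, and it is essentially the standard proof of this theorem; note that the paper does not prove Theorem \ref{L:vB} itself but cites it from Blackburn, de Rijke and Venema (Theorem 2.68), whose proof follows exactly your route --- reduce to showing that the modal consequences of $\varphi(x)$ entail $\varphi(x)$, produce a modally equivalent pair of pointed models by compactness, and pass to saturated extensions on which modal equivalence is itself a bisimulation. The only cosmetic difference is that you use $\omega$-saturated elementary extensions where the cited proof (and the paper's own adaptation of this detour for Theorem \ref{L:main}, via Lemma \ref{L:ext}) uses m-saturated ultrafilter extensions; either form of saturation suffices for the Hennessy--Milner-style step.
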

Its proof can be found, for example, in \cite[Theorem 2.68, pp.
103--104]{BRV}. It is easy to see that our main result below
(Theorem \ref{L:final}) is in an analogy with Van Benthem's
characterization theorem for intuitionistic propositional logic
both in its formulation and in methods of proof employed.
\begin{lemma}\label{L:bisim-asim}
Let $\varphi(x)$ be a formula invariant with respect to
asimulations. Then:
\begin{enumerate}
\item $\varphi(x)$ is invariant with respect to bisimulations.
\item $\neg\varphi(x)$ is invariant with respect to bisimulations.
\end{enumerate}
\end{lemma}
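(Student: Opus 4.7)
The key observation is that a bisimulation is a symmetric strengthening of an asimulation: condition \eqref{E:co1} is a biconditional whereas \eqref{E:c11} is only one-directional, and the bisimulation back-and-forth clauses already yield the stronger form required by \eqref{E:c22}. Any bisimulation should therefore induce an asimulation between the same pointed models, which lets us pull asimulation invariance back to bisimulation invariance.

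For part 1, given an $\langle(M,a),(N,b)\rangle$-bisimulation $E$, the plan is to set $A = E \cup E^{-1}$, regarded as a subset of $(D(M)\times D(N)) \cup (D(N)\times D(M))$, and check that $A$ is an $\langle(M,a),(N,b)\rangle$-asimulation. The initial condition $aAb$ is immediate from $aEb$. Whenever $a'Ab'$ with $a' \in D(\alpha)$, $b' \in D(\beta)$, the typing of $A$ forces $\{\alpha,\beta\} = \{M,N\}$, and the pair belongs to either $E$ or $E^{-1}$; condition \eqref{E:c11} is then supplied by the biconditional \eqref{E:co1} in either case. For condition \eqref{E:c22} the argument splits on which of $E$, $E^{-1}$ contributed the edge: if $(a',b') \in E$ then \eqref{E:co3} of $E$ yields the required $a'' \in D(M)$, while if $(a',b') \in E^{-1}$ then \eqref{E:co2} of $E$ (applied to $b'Ea'$) yields the required $a'' \in D(N)$. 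In each case the resulting link to $b''$ lies in $E$ or $E^{-1}$ and, by the definition of $A$, yields both $a''Ab''$ and $b''Aa''$ simultaneously. Invariance of $\varphi(x)$ under $A$ then gives the desired transfer of truth.

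For part 2, the essential fact is that bisimulations, unlike asimulations, are intrinsically symmetric: if $E$ is an $\langle(M,a),(N,b)\rangle$-bisimulation, then $E^{-1}$ is an $\langle(N,b),(M,a)\rangle$-bisimulation, since \eqref{E:co1} is a biconditional and clauses \eqref{E:co2}, \eqref{E:co3} swap under inversion. Hence, if $E$ is an $\langle(M,a),(N,b)\rangle$-bisimulation and $M,a\models\neg\varphi(x)$, then assuming $N,b\models\varphi(x)$ and applying part 1 to the bisimulation $E^{-1}$ would give $M,a\models\varphi(x)$, contradicting the assumption.

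The only real bookkeeping lies in the case split when verifying \eqref{E:c22} for $A$, where one must correctly match the direction of an edge of $A$ with the appropriate clause of $E$; but once the roles of $\alpha$ and $\beta$ are tracked this is entirely mechanical, so no serious obstacle is expected.
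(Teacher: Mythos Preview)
Your proof is correct and follows essentially the same approach as the paper: for part~1 you both set $A = E \cup E^{-1}$ and verify it is an $\langle(M,a),(N,b)\rangle$-asimulation via the same case split. For part~2 there is a cosmetic difference: the paper reuses the symmetric relation $A = E \cup E^{-1}$, observes that $bAa$ holds, and concludes directly that $A$ is an $\langle(N,b),(M,a)\rangle$-asimulation, whereas you pass through $E^{-1}$ as a reversed bisimulation and invoke part~1; both routes are equally short and amount to the same underlying symmetry observation.
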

\begin{proof}
(1) Let  $\Sigma_\varphi \subseteq \Sigma' \subseteq \Sigma$,
let $(M, a)$, $(N, b)$ be pointed $\Sigma'$-models and let $E$ be
an $\langle (M, a),(N, b)\rangle$-bisimulation such that $M, a
\models \varphi(x)$ but not $N, b \models \varphi(x)$. Then define
$A$ as $E \cup E^{-1}$. It is easy to verify that $A$ is an
$\langle (M, a),(N, b)\rangle$-asimulation: we obviously have
$aAb$, and condition \eqref{E:c11} is fulfilled.

To verify \eqref{E:c22}, assume that $a'Ab'$. Then either $a' \in
D(M) \wedge b' \in D(N)$ or $a' \in D(N) \wedge b' \in D(M)$. So
in the former case, by Definition \ref{D:bisim} and our definition
of $A$, we must have $a'Eb'$, while in the latter case we must
have $b'Ea'$. Therefore, in the former case, if $b'R^Nb''$ we
apply condition \eqref{E:co3} and choose $a'' \in D(M)$ such that
$a'R^Ma'' \wedge a''Eb''$, and so, by definition of $A$, we have
both $a''Ab''$ and $b''Aa''$. In the latter case, if $b'R^Mb''$ we
apply condition \eqref{E:co2} and choose $a'' \in D(N)$ such that
$a'R^Na'' \wedge b''Ea''$, and so, again by definition of $A$, we
have both $b''Aa''$ and $a''Ab''$. Thus $A$ is an $\langle (M,
a),(N, b)\rangle$-asimulation and $\varphi(x)$ is not invariant
with respect to asimulations, contrary to our assumption. The
first statement of the lemma is proved.

(2) Let  $\Sigma_\varphi \subseteq \Sigma' \subseteq \Sigma$, let
$(M, a)$, $(N, b)$ be pointed $\Sigma'$-models and let $E$ be an
$\langle (M, a),(N, b)\rangle$-bisimulation such that $N, b
\models \varphi(x)$ but not $M, a \models \varphi(x)$. Again,
define $A$ as $E \cup E^{-1}$. In the previous paragraph it was
established that $A$ verifies conditions \eqref{E:c11} and
\eqref{E:c22}. But since $aEb$, we also have $bAa$ and so $A$ is
in fact an $\langle (N, b),(M, a)\rangle$-asimulation, which
contradicts our assumption that  $\varphi(x)$ is invariant with
respect to asimulations.
\end{proof}

\begin{definition}\label{D:sat}
A model $M$ is called m-saturated iff for any $a \in D(M)$ and for
any set $\Theta(x)$ of standard modal $x$-translations of modal
propositional formulas it is true that
\begin{align*}
[\forall(\Theta'(x) \subseteq \Theta(x))(\Theta'(x)\text{ is
finite} \Rightarrow \exists b\in D(M)(aR^Mb \wedge M,b\models
\Theta'(x)))] \Rightarrow\\
\Rightarrow \exists c\in D(M)(aR^Mc \wedge M,c\models \Theta(x)).
\end{align*}
\end{definition}
Let $\Sigma' \subseteq \Sigma$. In what follows we adopt the
following notation for the fact that for any $x$ all
$\Sigma'$-formulas that are standard $x$-translations of
intuitionistic formulas true at $(M, a)$, are also true at $(N,
b)$:
\[
(M, a) \leq_{\Sigma'} (N, b).
\]
\begin{lemma}\label{L:sat}
Let $\Sigma' \subseteq \Sigma$, let $M$, $N$ be m-saturated
$\Sigma'$-models and let $(M, a) \leq_{\Sigma'} (N, b)$. Then
relation $\leq_{\Sigma'}$ is an $\langle (M,a),
(N,b)\rangle$-asimulation.
\end{lemma}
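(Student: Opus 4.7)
The plan is to check that $A := \leq_{\Sigma'}$, viewed as a binary relation between elements of $D(M)$ and $D(N)$ via the standard pointed-model convention, meets both clauses of Definition \ref{D:asim}. The root condition $aAb$ is just the hypothesis $(M,a) \leq_{\Sigma'} (N,b)$; clause \eqref{E:c11} is immediate, because every atomic $\Sigma'$-formula $P(x)$ is itself the standard $x$-translation $ST(p, x)$ of a propositional letter, and is hence preserved by $\leq_{\Sigma'}$ by its very definition. All the work sits in the back-and-forth clause \eqref{E:c22}.

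So fix $\alpha, \beta \in \{M,N\}$, $a' \in D(\alpha)$, $b' \in D(\beta)$ with $a' \leq_{\Sigma'} b'$, and $b'' \in D(\beta)$ with $b' R^\beta b''$. The goal is to produce $a'' \in D(\alpha)$ with $a' R^\alpha a''$ verifying exactly the same intuitionistic $\Sigma'$-translations as $b''$, which at once yields $b'' \leq_{\Sigma'} a''$ and $a'' \leq_{\Sigma'} b''$. The candidate type to realize is
\[
\Theta(y) = \{ST(i, y) : \beta, b'' \models ST(i, y)\} \cup \{\neg ST(j, y) : \beta, b'' \not\models ST(j, y)\},
\]
where $i, j$ range over intuitionistic formulas whose $y$-translations are $\Sigma'$-formulas; I will realize $\Theta(y)$ at an $R^\alpha$-successor of $a'$ by m-saturation of $\alpha$.

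The crux is finite satisfiability. Given a finite $\Theta'(y) \subseteq \Theta(y)$, split it into its positive part $ST(i_1,y),\ldots,ST(i_n,y)$ and its negative part $\neg ST(j_1,y),\ldots,\neg ST(j_m,y)$, and set $i := i_1 \wedge \ldots \wedge i_n$ (or $\bot \to \bot$ if $n = 0$) and $j := j_1 \vee \ldots \vee j_m$ (or $\bot$ if $m = 0$). Since $\beta, b'' \models ST(i,y)$ but $\beta, b'' \not\models ST(j,y)$ and $b' R^\beta b''$, the formula $ST(i \to j, x)$ fails at $(\beta, b')$. This formula is itself a standard $x$-translation of an intuitionistic formula, so $a' \leq_{\Sigma'} b'$ applied contrapositively gives $\alpha, a' \not\models ST(i \to j, x)$; any $R^\alpha$-successor of $a'$ witnessing this failure realizes $\Theta'(y)$.

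To invoke m-saturation, which is stated for sets of standard modal $x$-translations, observe that every $ST(i, y)$ is logically equivalent to some $Tr(m, y)$. This follows by a routine induction on $i$: atoms and $\bot$ are trivial; conjunction is immediate; disjunction uses $Tr(m,y) \vee Tr(m',y) \equiv Tr(\neg(\neg m \wedge \neg m'), y)$; and the intuitionistic implication case unpacks to $Tr(\Box \neg(m \wedge \neg m'), y)$. Negations of modal translations are again modal translations, so every member of $\Theta(y)$ is equivalent to a modal translation, and m-saturation of $\alpha$ delivers the desired $a''$. The main obstacle I foresee is the finite-satisfiability step --- identifying the right separating intuitionistic formula $i \to j$ and deploying $\leq_{\Sigma'}$ contrapositively; the reduction to modal translations needed to legitimately apply m-saturation is essentially syntactic.
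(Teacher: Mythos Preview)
Your proposal is correct and follows the same overall architecture as the paper's proof: the root condition and the atomic clause \eqref{E:c11} are immediate, and the back-and-forth clause \eqref{E:c22} is verified by forming the implication $ST(i \to j, x)$ from a finite fragment of the type, pushing its failure from $(\beta, b')$ back to $(\alpha, a')$ via $\leq_{\Sigma'}$, and then invoking m-saturation of $\alpha$.

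The one genuine difference lies in how you justify that m-saturation (which is stated for sets of standard modal translations) applies to the type $\Theta(y)$. The paper appeals to Corollary~\ref{L:c-inv} and Lemma~\ref{L:bisim-asim} to conclude that each $ST(i,y)$ and each $\neg ST(i,y)$ is bisimulation-invariant, and then invokes Van Benthem's theorem (Theorem~\ref{L:vB}) to obtain equivalence with a modal translation. You instead give a direct syntactic induction showing that every $ST(i,y)$ is logically equivalent to some $Tr(m,y)$ --- essentially the first-order shadow of the G\"odel translation of intuitionistic logic into $S4$. Your route is more elementary, since it sidesteps Van Benthem's theorem entirely; the paper's route reuses machinery already established and keeps the conceptual link to bisimulation-invariance visible. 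Both are perfectly valid.
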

\begin{proof}
It is obvious that $aAb$, and since for any unary predicate
letter $P$ and variable $x$ formula $P(x)$ is a standard
$x$-translation of an atomic intuitionistic formula, condition
\eqref{E:c11} is trivially satisfied for $\leq_{\Sigma'}$. To
verify condition \eqref{E:c22}, choose any $\alpha, \beta \in
\{\,M, N\,\}$, and $a' \in D(\alpha)$, $b',b'' \in D(\beta)$ such
that $(\alpha, a') \leq_{\Sigma'} (\beta, b')$ and $b'R^\beta
b''$. Then choose any variable $x$ and consider the following two
sets:
\begin{align*}
&\Gamma = \{\,i \mid ST(i, x)\text{ is a $\Sigma'$-formula, and }\beta, b'', \models ST(i, x)\,\};\\
&\Delta = \{\,i \mid ST(i, x)\text{ is a $\Sigma'$-formula, and
}\beta, b'', \models \neg ST(i, x)\,\}.
\end{align*}
We have by the choice of $\Gamma$, $\Delta$ that for every finite
$\Gamma' \subseteq \Gamma$ and $\Delta' \subseteq \Delta$ the
formula $ST(\bigwedge(\Gamma') \to \bigvee(\Delta'), x)$ is
disproved by $b''$ for $(\beta, b')$. So, by our premise that
$(\alpha, a') \leq_{\Sigma'} (\beta, b')$, the standard
translation of every such implication must be false at $(\alpha,
a')$ as well. This means that every finite subset of the set
\[
\{\,ST(i,x)\mid i\in\Gamma\,\} \cup \{\,\neg ST(i,x)\mid
i\in\Delta\,\}
\]
is true at some $a'' \in D(\alpha)$ such that $a'R^\alpha a''$.
(We set $\Delta' = \{\,ST(\bot, x)\,\}$ if the finite set in
question has an empty intersection with $\Delta$ and $\Gamma' =
\{\,ST(\bot \to \bot, x)\,\}$ if it has an empty intersection with
$\Gamma$.) But by Corollary \ref{L:c-inv} and Lemma
\ref{L:bisim-asim} every formula in the set under consideration is
invariant with respect to bisimulations and hence equivalent to a
standard modal $x$-translation of a modal propositional formula.
Therefore, by m-saturation of both $M$ and $N$ there must be an
$a'' \in D(\alpha)$ such that $a'R^\alpha a''$ and
\[
\alpha, a'' \models \{\,ST(i,x)\mid i\in\Gamma\,\} \cup \{\,\neg
ST(i,x)\mid i\in\Delta\,\}.
\]
By choice of $\Gamma$ and $\Delta$ and by the independence of
truth at a pointed model from the choice of a single free variable
in a formula we will have both $(\alpha, a'') \leq_{\Sigma'}
(\beta, b'')$ and $(\beta, b'') \leq_{\Sigma'} (\alpha, a'')$ and
so condition \eqref{E:c22} is also verified.
\end{proof}
\begin{lemma}\label{L:ext}
Let  $\Sigma_\varphi \subseteq \Sigma' \subseteq \Sigma$ and let
$M$ be a $\Sigma'$-model. Then there is a $\Sigma'$-model $N$ such
that $N$ is an extension of $M$, $N$ is m-saturated and there is a
map $f: D(M) \to D(N)$ such that for any formula $\varphi(x)$
which is invariant with respect to bisimulations and any $a \in
D(M)$ it is true that
\[
M, a \models \varphi(x) \Leftrightarrow N, f(a) \models
\varphi(x).
\]
\end{lemma}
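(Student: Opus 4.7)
The plan is to take $N$ to be an $\omega$-saturated elementary extension of $M$ and let $f: D(M) \to D(N)$ be the inclusion map. The existence of such an $N$ is a standard result of classical first-order model theory (it may be found, e.g., in \cite{EFT}). Since $\Sigma$ is countable, the standard existence theorems apply without any special care; indeed, one could even take $N$ to be $\aleph_1$-saturated if one preferred.

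Since $N$ is an elementary extension of $M$, the embedding $f$ preserves the truth of \emph{all} first-order formulas in one free variable. In particular, for every formula $\varphi(x)$---whether or not it is invariant with respect to bisimulations---and every $a \in D(M)$, we have $M, a \models \varphi(x) \Leftrightarrow N, f(a) \models \varphi(x)$, which yields the required biconditional immediately.

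To verify that $N$ is m-saturated, fix $a \in D(N)$ and let $\Theta(x)$ be a set of standard modal $x$-translations of modal propositional formulas such that every finite subset $\Theta'(x) \subseteq \Theta(x)$ is satisfied by some $R^N$-successor of $a$. Consider the $1$-type $p(x) = \{\,R(a, x)\,\} \cup \Theta(x)$ over the finite parameter set $\{a\}$. The hypothesis on $\Theta(x)$ says precisely that every finite subset of $p(x)$ is realized in $N$, so $p(x)$ is finitely satisfiable. By $\omega$-saturation of $N$, $p(x)$ is realized by some $c \in D(N)$; this $c$ witnesses $aR^Nc$ together with $N, c \models \Theta(x)$, which is exactly what Definition \ref{D:sat} requires.

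There is essentially no obstacle here; the only point worth underlining is that m-saturation, although phrased in terms of standard modal translations of propositional formulas, follows from ordinary first-order $\omega$-saturation in a completely routine way, because $\{\,R(a, x)\,\} \cup \Theta(x)$ is a bona fide first-order $1$-type over the single parameter $a$ and the modal nature of the formulas in $\Theta(x)$ plays no role in the extraction of $c$.
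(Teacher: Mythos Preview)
Your proof is correct and takes a genuinely different route from the paper. The paper lets $N$ be the \emph{ultrafilter extension} of $M$ (with $f(a)$ the principal ultrafilter generated by $a$), citing Propositions~2.59 and~2.61 of \cite{BRV} for the facts that ultrafilter extensions are m-saturated and that the principal-ultrafilter map sets up a bisimulation between $M$ and $N$; it then needs Van Benthem's Theorem~\ref{L:vB} to transfer bisimulation-invariant formulas along that bisimulation. Your approach via an $\omega$-saturated elementary extension is both simpler and strictly stronger on this point: elementarity preserves \emph{all} first-order formulas, so the biconditional comes for free without any appeal to Theorem~\ref{L:vB}, and m-saturation drops out of $\omega$-saturation immediately since the relevant type uses only the single parameter $a$. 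The paper's construction, by contrast, stays entirely within the modal toolkit and avoids importing classical saturation machinery. One minor quibble: \cite{EFT} does not appear to treat $\omega$-saturated elementary extensions explicitly, so a citation such as Chang--Keisler would be safer; the result is of course standard and the argument stands.
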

\begin{proof}
Let $N$ be an ultrafilter extension of $M$ and let
 $f(a)$ be the principal ultrafilter generated
by $a$ for any $a \in D(M)$. Then our lemma follows from
Propositions 2.59 and 2.61 of \cite[pp. 96-97]{BRV} and Theorem
\ref{L:vB}.
\end{proof}

We are prepared now to state and prove our main result.

\begin{theorem}\label{L:main}
Let $\varphi(x)$ be invariant with respect to asimulations. Then
$\varphi(x)$ is equivalent to a standard $x$-translation of an
intuitionistic formula.
\end{theorem}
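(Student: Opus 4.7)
The plan is to run a Van~Benthem-style compactness-and-saturation argument, adapted to asimulations via the lemmas already established. Set
\[
T(x) = \{\,ST(i,x) \mid \varphi(x) \models ST(i,x)\,\},
\]
a collection of $\Sigma_\varphi$-formulas. If I can prove $T(x) \models \varphi(x)$, then compactness yields intuitionistic formulas $i_1,\ldots,i_n$ with $ST(i_1,x) \wedge \ldots \wedge ST(i_n,x) \models \varphi(x)$, and since each conjunct is itself a consequence of $\varphi(x)$, the formula $\varphi(x)$ is logically equivalent to $ST(i_1 \wedge \ldots \wedge i_n,x)$, finishing the proof.

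To establish $T(x) \models \varphi(x)$, I would take an arbitrary $(N,b) \models T(x)$, suppose for contradiction that $(N,b) \not\models \varphi(x)$, and construct a pointed $\Sigma_\varphi$-model $(M,a)$ with $(M,a) \models \varphi(x)$ and $(M,a) \leq_{\Sigma_\varphi} (N,b)$. The existence of $(M,a)$ reduces to satisfiability of
\[
\Gamma(x) = \{\,\varphi(x)\,\} \cup \{\,\neg ST(i,x) \mid (N,b) \not\models ST(i,x)\,\},
\]
which follows by compactness: if some finite subset were unsatisfiable, $\varphi(x)$ would entail a disjunction $ST(i_1,x) \vee \ldots \vee ST(i_m,x)$, equivalent to $ST(i_1 \vee \ldots \vee i_m,x)$, so this standard translation would lie in $T(x)$ and hence be true at $(N,b)$, contradicting the choice of its disjuncts as false there.

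Next I would apply Lemma~\ref{L:ext} separately to $M$ and $N$ to obtain m-saturated extensions $M^*$, $N^*$ with accompanying embeddings $f,g$, and set $a^* = f(a)$, $b^* = g(b)$. The auxiliary observation driving the argument is that every standard $x$-translation of an intuitionistic formula is bisimulation-invariant: by Corollary~\ref{L:c-inv} it is asimulation-invariant, and any bisimulation $E$ yields the asimulation $E \cup E^{-1}$ (as in the proof of Lemma~\ref{L:bisim-asim}), whose symmetry delivers invariance in both directions. Combining this with the bisimulation-invariance of $\varphi(x)$ itself (Lemma~\ref{L:bisim-asim}(1)) and the preservation property of Lemma~\ref{L:ext}, I obtain $(M^*,a^*) \models \varphi(x)$ and $(M^*,a^*) \leq_{\Sigma_\varphi} (N^*,b^*)$. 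Lemma~\ref{L:sat} then tells us that $\leq_{\Sigma_\varphi}$ is an $\langle (M^*,a^*),(N^*,b^*)\rangle$-asimulation, so the assumed asimulation-invariance of $\varphi(x)$ yields $(N^*,b^*) \models \varphi(x)$; pulling this back through Lemma~\ref{L:ext} gives $(N,b) \models \varphi(x)$, the desired contradiction.

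The main technical hurdle is the third step: guaranteeing that the saturated extensions retain both the truth of $\varphi(x)$ at the image point and the relation $(M^*,a^*) \leq_{\Sigma_\varphi} (N^*,b^*)$. Once the auxiliary bisimulation-invariance of intuitionistic standard translations is in hand, the pieces assemble cleanly: Lemma~\ref{L:sat} supplies exactly the asimulation needed on the saturated side, Lemma~\ref{L:ext} bridges arbitrary and saturated models, and Lemma~\ref{L:bisim-asim} carries $\varphi(x)$ across the bridge. Everything else is the standard compactness pattern.
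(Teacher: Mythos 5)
Your proposal is correct and follows essentially the same route as the paper's own proof: define the set of intuitionistic consequences of $\varphi(x)$, show by compactness that any model of this set admits a companion model of $\varphi(x)$ with smaller intuitionistic theory, pass to m-saturated extensions via Lemma~\ref{L:ext}, and apply Lemma~\ref{L:sat} together with asimulation invariance to transfer $\varphi(x)$. The only differences (swapping the roles of $M$ and $N$, phrasing the companion-model step as direct satisfiability of $\Gamma(x)$ rather than the paper's double contradiction) are cosmetic.
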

\begin{proof}
We may assume that $\varphi(x)$ is satisfiable, for $\bot$ is
clearly invariant with respect to asimulations and $\bot
\leftrightarrow ST(\bot, x)$ is a valid formula. In what follows
we will write $IC(\varphi(x))$ for the set of
$\Sigma_\varphi$-formulas that are standard $x$-translations of
intuitionistic formulas following from $\varphi(x)$. For any
pointed $\Sigma_\varphi$-model $(M, a)$ we will denote the set of
$\Sigma_\varphi$-formulas that are standard $x$-translations of
intuitionistic formulas true at $(M, a)$, or \emph{intuitionistic
$\Sigma_\varphi$-theory} of $(M, a)$ by $IT_\varphi(M,a)$. It is
obvious that for any pointed $\Sigma_\varphi$-models $(M, a)$ and
$(N, b)$ we will have $(M, a) \leq_{\Sigma_\varphi}(N, b)$ if and
only if $IT_\varphi(M,a) \subseteq IT_\varphi(N,b)$.

Our strategy will be to show that $IC(\varphi(x)) \models
\varphi(x)$. Once this is done we will apply compactness of
first-order logic and conclude that $\varphi(x)$ is equivalent to
a finite conjunction of standard $x$-translations of
intuitionistic formulas and hence to a standard $x$-translation of
the corresponding intuitionistic conjunction.

To show this, take any pointed $\Sigma_\varphi$-model $(M, a)$
such that $M, a \models IC(\varphi(x))$. Such a model exists,
because $\varphi(x)$ is satisfiable and  $IC(\varphi(x))$ will be
satisfied in any pointed model satisfying $\varphi(x)$. Then we
can also choose a pointed $\Sigma_\varphi$-model $(N, b)$ such
that $N, b \models \varphi(x)$ and $IT_\varphi(N, b) \subseteq
IT_\varphi(M,a)$.

For suppose otherwise. Then for any pointed $\Sigma_\varphi$-model
$(N, b)$ such that $N, b \models \varphi(x)$ we can choose an
intuitionistic formula $i_{(N, b)}$ such that $ST(i_{(N, b)}, x)$
is a $\Sigma_\varphi$-formula true at $(N, b)$ but not at $(M,
a)$. Then consider the set
\[
S = \{\,\varphi(x)\,\} \cup \{\,\neg ST(i_{(N, b)}, x)\mid N, b
\models \varphi(x)\,\}
\]
Let $\{\,\varphi(x), \neg ST(i_{(N_1, b_1)}, x)\ldots , \neg
ST(i_{(N_u, b_u)}, x)\,\}$ be a finite subset of this set. If this
set is  unsatisfiable, then we must have $\varphi(x) \models
ST(i_{(N_1, b_1)}, x)\vee\ldots \vee ST(i_{(N_u, b_u)}, x)$, but
then we will also have $(ST(i_{(N_1, b_1)}, x)\vee\ldots \vee
ST(i_{(N_u, b_u)}, x)) \in IC(\varphi(x)) \subseteq
IT_\varphi(M,a)$, and hence $(ST(i_{(N_1, b_1)}, x)\vee\ldots \vee
ST(i_{(N_u, b_u)}, x))$ will be true at $(M, a)$. But then at
least one of $ST(i_{(N_1, b_1)}, x)\ldots ,ST(i_{(N_u, b_u)}, x)$
must also be  true at $(M, a)$, which contradicts the choice of
these formulas. Therefore, every finite subset of $S$ is
satisfiable, and by compactness $S$ itself is satisfiable as well.
But then take any pointed $\Sigma_\varphi$-model $(N',b')$ of $S$
and this will be a model for which we will have both $N', b'
\models ST(i_{(N', b')}, x)$ by choice of $i_{(N', b')}$ and $N',
b' \models \neg ST(i_{(N', b')}, x)$ by the satisfaction of $S$, a
contradiction.

Therefore, we will assume in the following that $(M, a)$, $(N, b)$
are pointed $\Sigma_\varphi$-models, $M, a \models
IC(\varphi(x))$, $N, b \models \varphi(x)$, and $IT_\varphi(N, b)
\subseteq IT_\varphi(M,a)$. Then, according to Lemma \ref{L:ext},
consider m-saturated models $M'$, $N'$ that are extensions of $M$
and $N$, respectively, and maps $f:D(M) \to D(M')$ and $g:D(N) \to
D(N')$ such that for any $\Sigma_\varphi$-formula $\chi(x)$ which
is invariant with respect to bisimulations and for any $a' \in M$
and $b' \in N$ we have
\[
M, a' \models \chi(x) \Leftrightarrow M', f(a') \models \chi(x);
N, b' \models \chi(x) \Leftrightarrow N', g(b') \models \chi(x)
\]
By our assumption, $\varphi(x)$ is invariant with respect to
asimulations and so, by Lemma \ref{L:bisim-asim} we get:
\begin{align}
&M, a \models \varphi(x) \Leftrightarrow M', f(a) \models
\varphi(x)\label{E:m1}\\
&N', g(b) \models \varphi(x)\label{E:m2}
\end{align}
Any standard $x$-translation of an intuitionistic formula is also,
by Corollary \ref{L:c-inv}, invariant with respect to
asimulations. Therefore, we have
\[
IT_\varphi(N',g(b)) = IT_\varphi(N, b) \subseteq IT_\varphi(M,a) =
IT_\varphi(M',f(a)).
\]
But then we have $(N',g(b)) \leq_{\Sigma_\varphi} (M',f(a))$, and
by m-saturation of $M'$, $N'$ and Lemma \ref{L:sat} the relation
$\leq_{\Sigma_\varphi}$ is an
$\langle(N',g(b)),(M',f(a))\rangle$-asimulation. But then by
\eqref{E:m2} and asimulation invariance of  $\varphi(x)$ we get
$M', f(a) \models \varphi(x)$, and further, by \eqref{E:m1} we
conclude that $M, a \models \varphi(x)$. Therefore, $\varphi(x)$
in fact follows from $IC(\varphi(x))$.
\end{proof}

The following theorem is an immediate consequence of Corollary
\ref{L:c-inv} and Theorem \ref{L:main}:
\begin{theorem}\label{L:final}
A formula $\varphi(x)$ is invariant with respect to asimulations
iff it is equivalent to a standard $x$-translation of an
intuitionistic formula.
\end{theorem}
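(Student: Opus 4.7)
The plan is to derive the biconditional of Theorem \ref{L:final} by directly combining the two implications already established in the excerpt. The right-to-left direction --- if $\varphi(x)$ is equivalent to a standard $x$-translation of an intuitionistic formula, then $\varphi(x)$ is invariant with respect to asimulations --- is exactly the content of Corollary \ref{L:c-inv}. The left-to-right direction --- if $\varphi(x)$ is invariant with respect to asimulations, then $\varphi(x)$ is equivalent to a standard $x$-translation of an intuitionistic formula --- is precisely Theorem \ref{L:main}. Stringing these together yields the biconditional and completes the proof.

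Because all the conceptual content has been absorbed into the preceding results, there is no genuine obstacle remaining at this step: the proof consists entirely of citing the two prior statements. Still, it is worth noting where the real work lies. The direction via Corollary \ref{L:c-inv} rests on the parametrized characterization (Theorem \ref{L:param}) together with Lemma \ref{L:k-asim1}, which lifts any asimulation to a $k$-asimulation for every $k$. The direction via Theorem \ref{L:main}, which is the substantive half, was handled by passing to m-saturated (ultrafilter) extensions supplied by Lemma \ref{L:ext}, using Lemma \ref{L:bisim-asim} to transfer both $\varphi(x)$ and $\neg\varphi(x)$ through the embeddings $f$ and $g$, invoking Lemma \ref{L:sat} to exhibit $\leq_{\Sigma_\varphi}$ as an asimulation between the m-saturated extensions, and finally applying first-order compactness to conclude that $\varphi(x)$ follows from a finite subset of $IC(\varphi(x))$ and hence is equivalent to the standard translation of the corresponding intuitionistic conjunction.
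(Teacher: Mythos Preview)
Your proposal is correct and matches the paper's own proof exactly: the paper states that Theorem \ref{L:final} is an immediate consequence of Corollary \ref{L:c-inv} (right-to-left) and Theorem \ref{L:main} (left-to-right), which is precisely what you do. Your additional paragraph summarizing where the real work lies in the antecedent results is accurate and helpful, though the paper itself omits such commentary.
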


Theorem \ref{L:final} stated above establishes a criterion for the
equivalence of first-order formula to a standard translation of
intuitionistic formula on arbitrary first-order models. But,
unlike in the case of modal propositional logic, some of these
models will not be intended models for intuitionistic logic.
Therefore it would be interesting to look for the criterion of
equivalence of first-order formula to a standard translation of
intuitionistic formula on `intuitionistic' subclass of first-order
models. As the class of intended models of intuitionistic
propositional logic constitutes a first-order definable subclass
of first-order models in general, we can show that such a
criterion is provided by invariance with respect to asimulations
on the models from this subclass using but a slight modification
of our proof for Theorems \ref{L:main} and \ref{L:final}.

To tighten up on terminology, we introduce the following
definitions:
\begin{definition}\label{D:int-mod}
Let $\Sigma' \subseteq \Sigma$. Then $\Sigma'$-model $M$ is
intuitionistic, iff $R^M$ is transitive and reflexive, and it is
true that
\[
\forall(P \in \Sigma')\forall(a,b \in D(M))(aR^Mb \wedge M,a
\models P(x) \Rightarrow M,b \models P(x)).
\]
\end{definition}
The notion of intuitionistic model naturally leads to the
following semantic definitions:
\begin{definition}\label{D:basics}
\begin{enumerate}
\item $\Gamma$ is intuitionistically satisfiable iff $\Gamma$ is
satisfied in some intuitionistic model.\item $\varphi$ is an
intuitionistic consequence of $\Gamma$ $(\Gamma \models_i
\varphi)$ iff $\Gamma \cup \{\,\neg\varphi\,\}$ is
intuitionistically unsatisfiable. \item $\varphi$ is
intuitionistically equivalent to $\psi$ iff both $\psi \models_i
\varphi$ and $\varphi \models_i \psi$.
\end{enumerate}
\end{definition}
For $\Sigma' \subseteq \Sigma$ let $Int(\Sigma')$ be the following
set of formulas
\[
\{\,\forall yR(y,y), \forall yzw((R(y,z) \wedge R(z,w)) \to
R(y,w))\,\} \cup \{\,\forall yz((P(y) \wedge R(y,z)) \to P(z))\mid
P \in \Sigma'\,\}.
\]
It is clear that for any set $\Gamma$ of $\Sigma'$-formulas and
for any $\Sigma'$-formula $\varphi$, $\Gamma$ is
intuitionistically satisfiable iff  $\Gamma \cup Int(\Sigma')$ is
satisfiable, and $\Gamma \models_i \varphi$ iff $\Gamma \cup
Int(\Sigma')\models \varphi$.
\begin{definition}\label{D:int-inv}
A formula $\varphi(x)$ is intuitionistically invariant with
respect to asimulations iff for any $\Sigma'$ such that
$\Sigma_\varphi \subseteq \Sigma' \subseteq \Sigma$, any pointed
intuitionistic $\Sigma'$-models $(M, a)$ and $(N, b)$, if there
exists an $\langle (M, a),(N, b)\rangle$-asimulation $A$ and $M, a
\models \varphi(x)$, then $N, b \models \varphi(x)$.
\end{definition}
\begin{example}
Formula $\exists y(R(x,y) \wedge P(y))$ is not intuitionistically
invariant with respect to asimulations. However, our argument from
Example \ref{ex-inv} does not show this, because models considered
in this example are not intuitionistic. To prove the absence of
intuitionistic invariance with respect to asimulations, consider
two $\{\,R^2,P^1\,\}$-models $M_1$ and $N_1$ such that $D(M_1) =
\{\,a,b,c\,\}$, $R^{M_1} =\{\,(a,a),(a,b), (a,c),(b,b),(c,c)\,\}$,
$P^{M_1} = \{\,c\,\}$, and $D(N_1) = \{\,d,e\,\}$, $R^{N_1}
=\{\,(d,d),(d,e),(e,e)\,\}$, $P^{N_1} = \varnothing$. These are
intuitionistic models. Then binary relation $C =
\{\,(a,d),(b,d),(d,b),(b,e),(e,b)\,\}$ is an $\langle (M_1,a),
(N_1,d)\rangle_k$-asimulation. It remains to note that the formula
under consideration is true at $(M_1,a)$ but not at $(N_1,d)$.
\end{example}
Now for the criterion of equivalence on the restricted class of
intuitionistic models:
\begin{theorem}\label{L:int-main}
Let $\varphi(x)$ be intuitionistically invariant with respect to
asimulations. Then $\varphi(x)$ is intuitionistically equivalent
to a standard $x$-translation of an intuitionistic formula.
\end{theorem}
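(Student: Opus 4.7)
The plan is to mirror the proof of Theorem \ref{L:main}, replacing every appeal to classical satisfiability and consequence by their intuitionistic counterparts via the axiom set $Int(\Sigma_\varphi)$. Write $IC_i(\varphi(x))$ for the set of standard $x$-translations of intuitionistic formulas $\psi$ such that $\varphi(x) \models_i \psi$, and for any pointed intuitionistic $\Sigma_\varphi$-model $(M,a)$ write $IT_\varphi(M,a)$ for the set of standard $x$-translations of intuitionistic formulas true at $(M,a)$. The target is to show $IC_i(\varphi(x)) \models_i \varphi(x)$; once this is in place, intuitionistic compactness (obtained by adjoining $Int(\Sigma_\varphi)$ and appealing to classical compactness) produces a finite subset of $IC_i(\varphi(x))$ whose conjunction is intuitionistically equivalent to $\varphi(x)$, and a conjunction of standard $x$-translations is itself a standard $x$-translation of an intuitionistic conjunction.

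To establish the key entailment I would take an arbitrary pointed intuitionistic $\Sigma_\varphi$-model $(M,a)$ satisfying $IC_i(\varphi(x))$ and produce a pointed intuitionistic $\Sigma_\varphi$-model $(N,b)$ with $N,b \models \varphi(x)$ and $IT_\varphi(N,b) \subseteq IT_\varphi(M,a)$. This step replays the compactness argument of Theorem \ref{L:main} almost verbatim, but with $Int(\Sigma_\varphi)$ added to every finite set considered, so that all witnessing models obtained are intuitionistic; finite satisfiability goes through unchanged, since reasoning classically with $Int(\Sigma_\varphi)$ adjoined is, by definition, reasoning intuitionistically.

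Next I would take m-saturated extensions $M'$ of $M$ and $N'$ of $N$ furnished by Lemma \ref{L:ext}, together with the maps $f, g$ preserving truth of bisimulation-invariant formulas. As in the proof of Theorem \ref{L:main}, Corollary \ref{L:c-inv} combined with Lemma \ref{L:bisim-asim} yields $IT_\varphi(N',g(b)) = IT_\varphi(N,b) \subseteq IT_\varphi(M,a) = IT_\varphi(M',f(a))$, so that $(N',g(b)) \leq_{\Sigma_\varphi} (M',f(a))$. Lemma \ref{L:sat} then makes $\leq_{\Sigma_\varphi}$ an $\langle (N',g(b)),(M',f(a))\rangle$-asimulation, and intuitionistic asimulation invariance of $\varphi(x)$ transports $N',g(b) \models \varphi(x)$ across this asimulation, giving $M',f(a) \models \varphi(x)$, hence $M,a \models \varphi(x)$.

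The one place where the argument is not a pure copy of the classical case is the penultimate step: intuitionistic asimulation invariance applies only between pointed \emph{intuitionistic} models, so I must check that the ultrafilter extensions $M'$ and $N'$ produced by Lemma \ref{L:ext} are themselves intuitionistic when $M$ and $N$ are. I expect this verification to be the main, albeit standard, obstacle. Reflexivity of $R^{M'}$ is immediate from the fact that $X \subseteq m_R(X)$ whenever $R$ is reflexive, so $X \in u$ forces $m_R(X) \in u$; transitivity follows from the inclusion $m_R \circ m_R \subseteq m_R$ when $R$ is transitive. Persistence of a unary $P$ reads $P^M \subseteq l_R(P^M)$ in the original model, so if an ultrafilter $u$ contains $P^M$ it contains $l_R(P^M)$, and by the standard dual description of $R^{ue}$ this forces $P^M \in v$ for every $R^{ue}$-successor $v$ of $u$, which is exactly persistence of $P$ in the ultrafilter extension. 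With this verification secured, the proof strategy described above goes through unchanged.
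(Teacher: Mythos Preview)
Your proposal is correct and follows essentially the same route as the paper's proof, which likewise relativizes the compactness arguments to $Int(\Sigma_\varphi)$ and then defers to the argument of Theorem~\ref{L:main} for the m-saturation/asimulation step. You are in fact more careful than the paper on one point: the paper simply writes ``reasoning exactly as in the proof of Theorem~\ref{L:main}'' without remarking that the ultrafilter extensions $M'$, $N'$ must themselves be intuitionistic (so that intuitionistic asimulation invariance, and the obvious intuitionistic variant of Lemma~\ref{L:bisim-asim} needed to transfer $\varphi$ along $f$ and $g$, may be applied), whereas you correctly flag this and sketch the standard verification.
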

\begin{proof}
We may assume that $\varphi(x)$ is intuitionistically satisfiable,
otherwise $\varphi(x)$ is intuitionistically equivalent to
$ST(\bot, x)$ and we are done. In what follows we will write
$IntC(\varphi(x))$ for the set of $\Sigma_\varphi$-formulas that
are standard $x$-translations of intuitionistic formulas
intuitionistically following from $\varphi(x)$.

Our strategy will be to show that $IntC(\varphi(x)) \models_i
\varphi(x)$. Once this is done we will conclude that
\[
Int(\Sigma_\varphi) \cup IntC(\varphi(x)) \models \varphi(x).
\]
Then we apply compactness of first-order logic and conclude that
$\varphi(x)$ is equivalent to a finite conjunction
$\psi_1(x)\wedge\ldots \wedge\psi_n(x)$ of formulas from this set.
But it follows then that $\varphi(x)$ is intuitionistically
equivalent to the conjunction of the set $IntC(\varphi(x)) \cap
\{\,\psi_1(x)\ldots, \psi_n(x)\,\}$. In fact, by our choice of
$IntC(\varphi(x))$ we have
\[
\varphi(x) \models_i \bigwedge(IntC(\varphi(x)) \cap
\{\,\psi_1(x)\ldots, \psi_n(x)\,\}),
\]
And by our choice of $\psi_1(x)\ldots, \psi_n(x)$ we have

\[
Int(\Sigma_\varphi) \cup (IntC(\varphi(x)) \cap
\{\,\psi_1(x)\ldots, \psi_n(x)\,\}) \models \varphi(x)
\]
and hence
\[
IntC(\varphi(x)) \cap \{\,\psi_1(x)\ldots, \psi_n(x)\,\} \models_i
\varphi(x).
\]

To show that $IntC(\varphi(x)) \models_i \varphi(x)$, take any
pointed intuitionistic $\Sigma_\varphi$-model $(M, a)$ such that
$M, a \models IntC(\varphi(x))$. Such a model exists, because
$\varphi(x)$ is intuitionistically satisfiable and
$IntC(\varphi(x))$ will be intuitionistically satisfied in any
pointed intuitionistic model satisfying $\varphi(x)$. Then we can
also choose a pointed intuitionistic $\Sigma_\varphi$-model $(N,
b)$ such that $N, b \models \varphi(x)$ and $IT_\varphi(N, b)
\subseteq IT_\varphi(M,a)$.

For suppose otherwise. Then for any pointed intuitionistic
$\Sigma_\varphi$-model $(N, b)$ such that $N, b \models
\varphi(x)$ we can choose an intuitionistic formula $i_{(N, b)}$
such that $ST(i_{(N, b)}, x)$ is a $\Sigma_\varphi$-formula true
at $(N, b)$ but not at $(M, a)$. Then consider the set
\[
S = \{\,\varphi(x)\,\} \cup \{\,\neg ST(i_{(N, b)}, x)\mid N
\text{ is intuitionistic, }N, b \models \varphi(x)\,\}
\]
Let $\{\,\varphi(x), \neg ST(i_{(N_1, b_1)}, x)\ldots , \neg
ST(i_{(N_u, b_u)}, x)\,\}$ be a finite subset of this set. If this
set is  intuitionistically unsatisfiable, then we must have
\[
\varphi(x) \models_i ST(i_{(N_1, b_1)}, x)\vee\ldots \vee
ST(i_{(N_u, b_u)}, x),
\]
but then we will also have
\[
(ST(i_{(N_1, b_1)}, x)\vee\ldots \vee ST(i_{(N_u, b_u)}, x)) \in
IntC(\varphi(x)) \subseteq IT_\varphi(M,a),
\]
and hence $(ST(i_{(N_1, b_1)}, x)\vee\ldots \vee ST(i_{(N_u,
b_u)}, x))$ will be true at $(M, a)$. But then at least one of
$ST(i_{(N_1, b_1)}, x)\ldots ,ST(i_{(N_u, b_u)}, x)$ must also be
true at $(M, a)$, which contradicts the choice of these formulas.
Therefore, every finite subset of $S$ is intuitionistically
satisfiable. But then every finite subset of the set $S \cup
Int(\Sigma_\varphi)$ is satisfiable as well. By compactness of
first-order logic $S \cup Int(\Sigma_\varphi)$ is satisfiable,
hence $S$ is satisfiable intuitionistically. But then take any
pointed intuitionistic $\Sigma_\varphi$-model $(N',b')$ of $S$ and
this will be a model for which we will have both $N', b' \models
ST(i_{(N', b')}, x)$ by choice of $i_{(N', b')}$ and $N', b'
\models \neg ST(i_{(N', b')}, x)$ by the satisfaction of $S$, a
contradiction.

Therefore, for any given pointed intuitionistic
$\Sigma_\varphi$-model $(M, a)$ of $IntC(\varphi(x))$ we can
choose a pointed intuitionistic $\Sigma_\varphi$-model $(N, b)$
such that $N, b \models \varphi(x)$ and $IT_\varphi(N, b)
\subseteq IT_\varphi(M,a)$. Then, reasoning exactly as in the
proof of Theorem \ref{L:main}, we conclude that $M, a \models
\varphi(x)$. Therefore, $\varphi(x)$ in fact intuitionistically
follows from $IntC(\varphi(x))$.
\end{proof}
\begin{theorem}\label{L:int-final}
A formula $\varphi(x)$ is intuitionistically invariant with
respect to asimulations iff it is intuitionistically equivalent to
a standard $x$-translation of an intuitionistic formula.
\end{theorem}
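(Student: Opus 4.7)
The plan is to observe that Theorem \ref{L:int-final} decomposes cleanly into two implications, one of which is exactly Theorem \ref{L:int-main} and the other of which is a short consequence of Corollary \ref{L:c-inv} together with the definitions of intuitionistic equivalence and intuitionistic invariance. So there is no need to duplicate any of the heavy machinery (m-saturation, ultrafilter extensions, compactness) of the preceding results; the job is purely to assemble them.

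For the left-to-right direction, I would simply invoke Theorem \ref{L:int-main}: if $\varphi(x)$ is intuitionistically invariant with respect to asimulations, then $\varphi(x)$ is intuitionistically equivalent to a standard $x$-translation of an intuitionistic formula. There is nothing further to prove in this direction.

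For the right-to-left direction, suppose $\varphi(x)$ is intuitionistically equivalent to $ST(i,x)$ for some intuitionistic formula $i$. By Corollary \ref{L:c-inv}, $ST(i,x)$ is invariant with respect to asimulations in the sense of Definition \ref{D:inv}, that is, across all pointed $\Sigma'$-models. Restricting attention to pointed intuitionistic $\Sigma'$-models with $\Sigma_\varphi \subseteq \Sigma' \subseteq \Sigma$, I would argue as follows: let $(M,a)$ and $(N,b)$ be pointed intuitionistic $\Sigma'$-models and let $A$ be an $\langle (M,a),(N,b)\rangle$-asimulation with $M,a\models \varphi(x)$. Since $M$ is intuitionistic, the intuitionistic equivalence of $\varphi(x)$ and $ST(i,x)$ (recast via Definition \ref{D:basics} as equivalence modulo $Int(\Sigma_\varphi)$) yields $M,a\models ST(i,x)$; by asimulation invariance of $ST(i,x)$ we obtain $N,b\models ST(i,x)$; and since $N$ is intuitionistic, intuitionistic equivalence again delivers $N,b\models \varphi(x)$. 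Hence $\varphi(x)$ is intuitionistically invariant with respect to asimulations.

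I do not anticipate a genuine obstacle here: both implications are immediate once the earlier results are in hand. The only point one has to be slightly careful about is that intuitionistic equivalence is a strictly weaker notion than classical equivalence, so Corollary \ref{L:c-inv} cannot be applied directly to $\varphi(x)$ itself — it must be applied to the intuitionistic translation $ST(i,x)$, with the passage back to $\varphi(x)$ made legitimate by the assumption that the two endpoints $(M,a)$ and $(N,b)$ of the asimulation lie in intuitionistic models.
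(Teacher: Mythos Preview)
Your proposal is correct and follows essentially the same route as the paper: the left-to-right direction is simply Theorem \ref{L:int-main}, and the right-to-left direction applies Corollary \ref{L:c-inv} to $ST(i,x)$ (not to $\varphi(x)$), using the intuitionistic equivalence at each endpoint to transfer truth between $\varphi(x)$ and $ST(i,x)$ on the intuitionistic models $M$ and $N$. If anything, you are slightly more explicit than the paper, which compresses the passage from $M,a\models\varphi(x)$ to $M,a\models ST(i,x)$ into a single line.
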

\begin{proof}
From left to right our theorem follows from Theorem
\ref{L:int-main}. In the other direction, assume that $\varphi(x)$
is intuitionistically equivalent to $ST(i,x)$ and assume that for
some $\Sigma'$ such that $\Sigma_\varphi \subseteq \Sigma'
\subseteq \Sigma$, some pointed intuitionistic $\Sigma'$-models
$(M, a)$ and $(N, b)$, and some $\langle (M, a),(N,
b)\rangle$-asimulation $A$ we have $M, a \models \varphi(x)$.
Then, by Corollary \ref{L:c-inv} we have $N, b \models ST(i,x)$,
but since $ST(i,x)$ is intuitionistically equivalent to
$\varphi(x)$ and $N$ is an intuitionistic model, we also have $N,
b \models\varphi(x)$. Therefore, $\varphi(x)$ is
intuitionistically invariant with respect to asimulations.
\end{proof}

\section{Conclusion and further research}\label{S:final}
Theorems \ref{L:param}, \ref{L:final}, and \ref{L:int-final}
proved above show that the general idea of asimulation for
intuitionistic propositional logic is a faithful analogue of the
idea of bisimulation for modal propositional logic in many
important respects.

As for the future research, it is natural to concentrate on
extending the above results onto the level of intuitionistic
predicate logic in order to obtain theorems analogous to Theorem
21 of \cite[p. 124]{vB}. In fact, we already obtained a proof of a
`parametrized' version of such result by extending techniques
employed in the section \ref{S:Param} to cover the predicate case.
We hope to publish this result in some of our future papers.

}
\end{document}